\numberwithin{equation}{section}
\newtheorem{Theorem}{Theorem}[section]
\newtheorem{Corollary}[Theorem]{Corollary}
\newtheorem{Proposition}[Theorem]{Proposition}
\begin{document}

\newcommand{\arXivNumber}{1804.02856}

\renewcommand{\thefootnote}{}

\renewcommand{\PaperNumber}{088}

\FirstPageHeading

\ShortArticleName{Discrete Orthogonal Polynomials with Hypergeometric Weights and Painlev\'e VI}

\ArticleName{Discrete Orthogonal Polynomials\\ with Hypergeometric Weights and Painlev\'e VI\footnote{This paper is a~contribution to the Special Issue on Painlev\'e Equations and Applications in Memory of Andrei Kapaev. The full collection is available at \href{https://www.emis.de/journals/SIGMA/Kapaev.html}{https://www.emis.de/journals/SIGMA/Kapaev.html}}}

\Author{Galina FILIPUK~$^\dag$ and Walter VAN ASSCHE~$^\ddag$}
\AuthorNameForHeading{G.~Filipuk and W.~Van Assche}

\Address{$^\dag$~Faculty of Mathematics, Informatics and Mechanics, University of Warsaw,\\
\hphantom{$^\dag$}~Banacha 2, Warsaw, 02-097, Poland}
\EmailD{\href{mailto:filipuk@mimuw.edu.pl}{filipuk@mimuw.edu.pl}}

\Address{$^\ddag$~Department of Mathematics, KU Leuven,\\
\hphantom{$^\ddag$}~Celestijnenlaan 200B box 2400, BE-3001 Leuven, Belgium}
\EmailD{\href{mailto:walter.vanassche@kuleuven.be}{walter.vanassche@kuleuven.be}}

\ArticleDates{Received April 10, 2018, in final form August 20, 2018; Published online August 24, 2018}

\Abstract{We investigate the recurrence coefficients of discrete orthogonal polynomials on the non-negative integers with hypergeometric weights
and show that they satisfy a system of non-linear difference equations and a non-linear second order differential equation in one of the parameters of the weights. The non-linear difference equations form a pair of discrete Painlev\'e equations and the differential equation is the $\sigma$-form of the sixth Painlev\'e equation. We briefly investigate the asymptotic behavior of the recurrence coefficients as $n\to \infty$ using the discrete Painlev\'e equations.}

\Keywords{discrete orthogonal polynomials; hypergeometric weights; discrete Painlev\'e equations; Painlev\'e VI}
\Classification{33C45; 33E17; 34M55; 42C05}

\renewcommand{\thefootnote}{\arabic{footnote}}
\setcounter{footnote}{0}

\section{Introduction}

In the past few years many semi-classical orthogonal polynomials were investigated and discrete and continuous Painlev\'e equations were found for their recurrence coefficients. In this paper we are interested in some discrete orthogonal polynomials on the integers $\mathbb{N}=\{0,1,2,3,\ldots\}$. For the orthonormal polynomials one has
\begin{gather*} \sum_{k=0}^\infty p_n(k)p_m(k) w_k = \delta_{m,n}, \qquad p_n(x) = \gamma_n x^n +\cdots \end{gather*}
and the three term recurrence relation is
\begin{gather*} xp_n(x) = a_{n+1}p_{n+1}(x) + b_n p_n(x) + a_n p_{n-1}(x), \end{gather*}
where, as usual, we take $a_0=0$, and the weights are such that all the moments are finite
\begin{gather*} m_n = \sum_{k=0}^\infty k^n w_k < \infty , \qquad n=0,1,2,\ldots. \end{gather*}
For the monic orthogonal polynomials $P_n = p_n/\gamma_n$ the recurrence relation becomes
\begin{gather*} xP_n(x) = P_{n+1}(x) + b_n P_n(x) + a_n^2 P_{n-1}(x). \end{gather*}
The following families have already been analyzed earlier:
\begin{itemize}\itemsep=0pt
 \item The Charlier polynomials $C_n(x;a)$ $(a >0)$ \cite[Section~VI.1]{Chihara}, \cite[Section~9.14]{Koekoek} form a~system of classical orthogonal polynomials on the integers $\mathbb{N}$ satisfying
 \begin{gather*} \sum_{k=0}^\infty C_n(k;a)C_m(k;a) \frac{a^k}{k!} = 0, \qquad n \neq m. \end{gather*}
 The monic Charlier polynomials $P_n(x) = (-1)^n a^n C_n(x;a)$ satisfy the recurrence relation
 \begin{gather*} xP_n(x) = P_{n+1}(x) + (n+a) P_n(x) + na P_{n-1}(x), \end{gather*}
 hence the weights are $w_k = a^k/k!$ and the recurrence coefficients are $a_n^2=na$ and $b_n = n+a$. These are simple polynomial expressions in $n$ and $a$.
 \item The Meixner polynomials $M_n(x;\beta,c)$ $(\beta >0$, $0 < c < 1)$ \cite[Section~VI.3]{Chihara}, \cite[Section~9.10]{Koekoek} are also a family of classical orthogonal polynomials:
 \begin{gather*} \sum_{k=0}^\infty M_n(k;\beta,c) M_m(k;\beta,c) \frac{(\beta)_k c^k}{k!} = 0, \qquad n \neq m, \end{gather*}
 and the recurrence coefficients are again simple and given by
 \begin{gather} \label{Meixner}
 a_n^2 = \frac{n(n+\beta-1) c}{(1-c)^2}, \qquad b_n = \frac{n+(n+\beta)c}{1-c} .
 \end{gather}
When $\beta=-N$ is a negative integer and $c=\frac{p}{p-1}$, with $0 < p < 1$, then one finds Krawtchouk polynomials $K_n(x;p,N)$. This is a finite family of polynomials which are orthogonal for the binomial distribution. The recurrence coefficients are
\begin{gather*} a_n^2 = np(1-p)(N+1-n), \qquad b_n = p(N-n)+n(1-p). \end{gather*}
Note that $a_{N+1}^2=0$, which comes from the fact that this is a finite family of orthogonal polynomials with a measure supported on $N+1$ points.
 \item Generalized Charlier polynomials with weights $w_k = \frac{a^k}{k! (\beta)_k}$ $(a >0$, $\beta >0)$ were, for $\beta=1$, first considered in \cite{Hounkonnou} and analyzed in~\cite{Mama}. The general case $\beta >0$ was investigated in~\cite{Smet} where the discrete Painlev\'e equations are given, and~\cite{Filipuk2013} where the Painlev\'e differential equation was given. Clarkson~\cite{Clarkson2013} found the connection with the Painlev\'e equation in a~different way, starting from the Hankel determinants and the special function solutions of Painlev\'e equations. For $\beta=1$ the recurrence coefficients are given by $a_n^2 = a\big(1-c_n^2\big)$ and $b_n= n+\sqrt{a} c_n c_{n+1}$, where $c_n$ satisfies the discrete Painlev\'e~II equation
 \begin{gather*} c_{n+1} + c_{n-1} = \frac{n c_n}{\sqrt{a} (1-c_n^2)}, \end{gather*}
 with initial conditions $c_0=1$ and $c_1 = I_1\big(2\sqrt{a}\big)/I_0\big(2\sqrt{a}\big)$, where $I_\nu$ is the modified Bessel function. For $\beta \neq 1$ the recurrence coefficients satisfy
 \begin{gather*}
 \big(a_{n+1}^2-a\big) \big(a_n^2-a\big) = a (b_n-n) (b_n-n+\beta-1), \\
 b_n + b_{n-1} -n + \beta = \frac{an}{a_n^2},
 \end{gather*}
 with initial conditions $a_0^2=0$ and $b_0= \sqrt{a} I_{\beta}\big(2\sqrt{a}\big)/I_{\beta-1}\big(2\sqrt{a}\big)$. This is a limiting case of a discrete Painlev\'e IV equation with surface/symmetry $D_4^{(1)}$ \cite[Section~8.1.16]{KNY}. In \cite[Theorem~2.1]{Filipuk2013} it was also shown that~$b_n$, as a function of the parameter~$a$, satisfies a Painlev\'e V equation with parameter $\delta=0$. Such a Painlev\'e equation can be transformed to a Painlev\'e III equation.
\item Generalized Meixner polynomials with weights $w_k = \frac{(\gamma)_k a^k}{k! (\beta)_k}$ $(a >0$, $\beta>0$, $\gamma >0)$ were for $\beta=1$ investigated in \cite{Boelen2011} and for general $\beta >0$ in~\cite{Smet}. The special case $\beta=\gamma$ gives the Charlier polynomials. In \cite[Theorem~3.1]{Smet} it was shown that the recurrence coefficients are given by $a_n^2 = na - (\gamma-1)u_n$, $b_n=n+\gamma-\beta+a-(\gamma-1)v_n/a$, where $(u_n,v_n)_{n \in \mathbb{N}}$ satisfy the system
\begin{gather*}
 (u_n+v_n)(u_{n+1}+v_n) = \frac{\gamma-1}{a^2} v_n(v_n-a) \left( v_n - a \frac{\gamma-\beta}{\gamma-1} \right), \\
 (u_n+v_n)(u_n+v_{n-1}) = \frac{u_n}{u_n-\frac{an}{\gamma-1}} (u_n+a)\left(u_n+a \frac{\gamma-\beta}{\gamma-1} \right),
\end{gather*}
with initial conditions $u_0=0$ and
\begin{gather*} v_0=\frac{a}{\gamma-1} \left( \gamma-\beta+a- \frac{\gamma a M(\gamma+1,\beta+1,a)}{\beta M(\gamma,\beta,a)} \right), \end{gather*}
where $M(a,b,z)$ is the confluent hypergeometric function. This system of non-linear recurrence relations is a limiting case of the
asymmetric discrete Painlev\'e~IV equation related to d-P$\big(E_6^{(1)}/A_2^{(1)}\big)$ in \cite[Section~8.1.15]{KNY}. In~\cite{Filipuk2011} and~\cite{Boelen2011} it was shown that $v_n$, as a function of $a$, satisfies a Painlev\'e equation. See also~\cite{Clarkson2013} for a more direct approach. If we define a~function $y_n(a)$ by
\begin{gather*} v_n = \frac{a(ay_n' - (1+\beta-2)y_n^2 + (n+1-a+\beta-2\gamma)y_n-n)}{2(\gamma-1)(y_n-1)y_n} , \end{gather*}
then $y_n$ satisfies Painlev\'e~V
\begin{gather*} y_n'' = \left( \frac{1}{2y_n} + \frac{1}{y_n-1} \right) (y_n')^2 - \frac{y_n'}{a} + \frac{(y_n-1)^2}{a^2} \left( Ay_n+\frac{B}{y_n} \right) + \frac{Cy_n}{a} + \frac{Dy_n(y_n+1)}{y_n-1}, \end{gather*}
 with
\begin{gather*} A= \frac{(\beta-1)^2}{2}, \qquad B = - \frac{n^2}{2}, \qquad C= n-\beta+2\gamma, \qquad D = - \frac12. \end{gather*}
When $\gamma=-N$ is a negative integer one deals with generalized Krawtchouk polynomials, which were investigated in \cite{Boelen2013}.
\end{itemize}

All these families of discrete orthogonal polynomials are orthogonal on the integers $\mathbb{N} = \{0,1,2,\ldots \}$. They were studied by Dominici and Marcell\'an in~\cite{DomMarc} who were investigating discrete semi-classical orthogonal polynomials of class one, which also includes the Hahn polynomials (which are orthogonal on a finite set $\{0,1,2.\ldots,N\}$). They gave limit relations between these and other families of orthogonal polynomials. The generalized Charlier polynomials and the generalized Meixner polynomials can also be made orthogonal on the shifted lattice $\mathbb{N} + \beta-1$ if $\beta <2$, and the corresponding recurrence coefficients satisfy the same Painlev\'e equations, but with a different initial value for~$b_0$. The more general setting is to consider the generalized Charlier and Meixner polynomials as orthogonal polynomials on the union of $\mathbb{N}$ and $\mathbb{N} + \beta-1$. See~\cite{Smet} for more details. This general setting on the bi-lattice gives solutions of the Painlev\'e equations depending on a seed function (the moment~$m_0$) that consists of a linear combination of two solutions of the Bessel equation or the Kummer equation.

There is one case of discrete orthogonal polynomials that has not been considered in much detail and which also has recurrence coefficients that satisfy discrete and continuous Painlev\'e equations. Take the weights
\begin{gather} \label{weights}
 w_k = \frac{(\alpha)_k (\beta)_k}{(\gamma)_k k!} c^k, \qquad \alpha,\beta,\gamma >0,\qquad 0 < c < 1,
\end{gather}
which corresponds to case~7 in \cite{DomMarc}. The initial moment of this weight is
\begin{gather*} m_0 = \sum_{k=0}^\infty \frac{(\alpha)_k (\beta)_k}{(\gamma)_k k!} c^k = {}_2F_1(\alpha,\beta;\gamma;c) \end{gather*}
involving the Gauss hypergeometric function, and all the other moments are
\begin{gather*} m_n = \sum_{k=0}^\infty k^n \frac{(\alpha)_k (\beta)_k}{(\gamma)_k k!} c^k = \left( c \frac{{\rm d}}{{\rm d}c} \right)^n m_0. \end{gather*}
So therefore one may expect that the recurrence coefficients of the corresponding orthogonal polynomials satisfy a Painlev\'e VI equation, because Painlev\'e VI has special function solutions in terms of hypergeometric functions, see \cite[Section~7.5]{ClarksonLN}. There are also special function solutions for discrete Painlev\'e equations, and for a review we refer to~\cite{Ramani}. In this paper we will find a system of two first order recurrence relations (see Theorem~\ref{thm1}) which allows us to deduce some asymptotic behavior as $n \to \infty$ (Section~\ref{sec4}). In Section~\ref{sec6} we make the connection with the $\sigma$-form of the sixth Painlev\'e equation (see Theorem \ref{thm2}). Note that Dominici already obtained non-linear recurrence relations for the recurrence coefficients in \cite[Theorem~4]{Domin} which he calls the Laguerre--Freud equations. These are however of higher order than two and neither they are identified as discrete Painlev\'e equations, nor is a connection made with Painlev\'e~VI.

There are also some examples of continuous weights for which the recurrence coefficients of the orthogonal polynomials are related to Painlev\'e~VI. Dai and Zhang~\cite{DaiZhang} and Lyu and Chen~\cite{LyuChen} considered a generalization of the Jacobi weight function on~$[0,1]$,
\begin{gather*} w(x,t) = x^\alpha(1-x)^\beta |x-t|^\gamma, \qquad x \in [0,1], \end{gather*}
where $t$ is a real parameter, and found that Painlev\'e VI is the relevant equation for the recurrence coefficients as a function of $t$.
Chen and Zhang \cite{ChenZhang} investigated another modification of the Jacobi weight on $[0,1]$,
\begin{gather*} w(x,t) = x^\alpha (1-x)^\beta \bigl(A+ B \Theta(x-t)\bigr), \qquad x \in [0,1], \end{gather*}
where $\Theta$ is the Heaviside function, and showed that Painlev\'e VI is appearing for the recurrence coefficients of the corresponding orthogonal polynomials. In both cases the moments can be expressed in terms of the Gauss hypergeometric function, and it is the special function
solution of Painlev\'e VI which is needed to find the recurrence coefficients. See also \cite[Section~6.2.5]{WVA} for this connection.

\section{Hypergeometric weights} \label{sec2}
We will investigate the orthogonal polynomials given by the discrete orthogonality relations
\begin{gather*} \sum_{k=0}^\infty p_n(k)p_m(k) \frac{(\alpha)_k (\beta)_k}{(\gamma)_k k!} c^k = \delta_{m,n}, \end{gather*}
with $\alpha,\beta,\gamma >0$ and $0 < c < 1$, and in particular we want to find the recurrence coefficients $(a_n,b_n)_{n \in \mathbb{N}}$ in the three term recurrence relation
\begin{gather} \label{3TRR}
 xp_n(x) = a_{n+1} p_{n+1}(x) + b_n p_n(x) + a_n p_{n-1}(x) .
\end{gather}
The weights $w_k=w(k)$ can be given as the values at the integers $k \in \mathbb{N}$ of the function
\begin{gather} \label{hyperw}
 w(x) = \frac{\Gamma(\gamma)}{\Gamma(\alpha)\Gamma(\beta)} \frac{\Gamma(\alpha+x) \Gamma(\beta+x)}{\Gamma(\gamma+x) \Gamma(x+1)} c^x.
\end{gather}
Observe that for $\alpha=\gamma$ one finds the weights for the Meixner polynomials, and for $c\to 0$ and $\alpha \to \infty$, in such a way that
$\alpha c \to a >0$, one finds the generalized Meixner weight, which in turn for $\beta=\gamma$ gives the Charlier weight and for $a \to 0$ and
$\beta \to \infty$ in such a way that $\beta a \to \hat{a} >0$, gives the generalized Charlier weight. We will use the theory of ladder operators for discrete orthogonal polynomials~\cite{INS}, \cite[Section~6.3]{Ismail}. This uses a discrete potential
\begin{gather*} u(x) = - \frac{w(x)-w(x-1)}{w(x)} = -1 + \frac{(\gamma+x-1) x}{c (\alpha+x-1)(\beta+x-1)}, \end{gather*}
which is rational, with simple poles at $x=-\alpha$ and $x=-\beta$.
Define
\begin{gather*} A_n(x) = a_n \sum_{k=0}^\infty p_n(k)p_n(k-1) \frac{u(x+1)-u(k)}{x+1-k} w_k, \end{gather*}
and
\begin{gather*} B_n(x) = a_n \sum_{k=0}^\infty p_n(k)p_{n-1}(k-1) \frac{u(x+1)-u(k)}{x+1-k} w_k, \end{gather*}
then one has the structure relation
\begin{gather} \label{SR}
 \Delta p_n(x) = A_n(x) p_{n-1}(x) - B_n(x) p_n(x),
\end{gather}
where $\Delta p_n(x) = p_n(x+1)-p_n(x)$ is the forward difference of $p_n(x)$. Some straightforward calculus shows that
\begin{gather*} \frac{u(x+1)-u(k)}{x+1-k} = \frac{c_k}{x+\alpha} + \frac{d_k}{x+\beta}, \end{gather*}
for certain sequences $(c_k,d_k)$, so that
\begin{gather*} \frac{A_n(x)}{a_n} = \frac{u_n}{x+\alpha} + \frac{v_n}{x+\beta}, \qquad B_n(x) = \frac{r_n}{x+\alpha} + \frac{s_n}{x+\beta}, \end{gather*}
where $(u_n,v_n)$ and $(r_n,s_n)$ are sequences depending on $\alpha$, $\beta$, $\gamma$, $c$. The compatibility bet\-ween~\eqref{3TRR} and~\eqref{SR}
gives the relations
\begin{gather*} B_{n+1}(x) + B_n(x) = \frac{x-b_n}{a_n} A_n(x) - u(x+1) + \sum_{j=0}^n \frac{A_j(x)}{a_j} \end{gather*}
and
\begin{gather*} a_{n+1}A_{n+1}(x) - a_n^2 \frac{A_{n-1}(x)}{a_{n-1}} = (x-b_n) B_{n+1}(x) - (x-b_n+1) B_n(x) + 1. \end{gather*}
In our case this gives
\begin{gather}
 \frac{r_n}{x+\alpha} + \frac{s_n}{x+\beta} + \frac{r_{n+1}}{x+\alpha} + \frac{s_{n+1}}{x+\beta} \nonumber\\
\qquad{} = (x-b_n) \left( \frac{u_n}{x+\alpha} + \frac{v_n}{x+\beta} \right) + 1 - \frac{(\gamma+x)(x+1)}{c(\alpha+x)(\beta+x)}
 + \sum_{j=0}^n \left( \frac{u_j}{x+\alpha} + \frac{v_j}{x+\beta} \right),\label{eq1}
\end{gather}
and
\begin{gather}
 a_{n+1}^2 \left( \frac{u_{n+1}}{x+\alpha} + \frac{v_{n+1}}{x+\beta} \right) - a_n^2 \left( \frac{u_{n-1}}{x+\alpha} + \frac{v_{n-1}}{x+\alpha} \right) \nonumber\\
\qquad{} = (x-b_n) \left( \frac{r_{n+1}}{x+\alpha} + \frac{s_{n+1}}{x+\beta} \right) - (x-b_n+1) \left( \frac{r_n}{x+\alpha} + \frac{s_n}{x+\beta} \right) + 1.\label{eq2}
\end{gather}
Our goal is to determine the unknown sequences $a_n$, $b_n$, $u_n$, $v_n$, $r_n$, $s_n$ from the compatibility relations \eqref{eq1}--\eqref{eq2}.

\begin{Proposition} \label{prop1} For $\alpha \neq \beta$ the sequences $u_n$, $v_n$, $r_n$, $s_n$ are given by
\begin{gather}
 (\alpha-\beta) u_n = 2n+1 - \frac{1-c}{c} b_n + \frac{\alpha+\beta-\gamma-1}{c} + (n+1-\beta) \frac{1-c}{c}, \label{un} \\
 (\beta-\alpha) v_n = 2n+1 - \frac{1-c}{c} b_n + \frac{\alpha+\beta-\gamma-1}{c} + (n+1-\alpha) \frac{1-c}{c}, \label{vn}
\end{gather}
and
\begin{gather}
 (\alpha-\beta) r_n= \frac{n(n-1)}{2} - \frac{1-c}{c} a_n^2 + \beta n + \sum_{k=0}^{n-1} b_k, \label{rn} \\
 (\beta-\alpha) s_n= \frac{n(n-1)}{2} - \frac{1-c}{c} a_n^2 + \alpha n + \sum_{k=0}^{n-1} b_k. \label{sn}
\end{gather}
\end{Proposition}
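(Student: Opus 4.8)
The plan is to read off scalar equations from the two compatibility relations \eqref{eq1} and \eqref{eq2}. Each side is a rational function of $x$ whose only singularities are simple poles at $x=-\alpha$ and $x=-\beta$, so the identity is equivalent to matching the polynomial part (the behaviour as $x\to\infty$) together with the residues at $-\alpha$ and at $-\beta$; equivalently one multiplies through by $(x+\alpha)(x+\beta)$ and compares the coefficients of $x^2$, $x^1$, $x^0$. The only analytic input needed is the partial-fraction data of the rational potential term: $\frac{(\gamma+x)(x+1)}{c(\alpha+x)(\beta+x)}$ tends to $1/c$ as $x\to\infty$ and has residues $\frac{(1-\alpha)(\gamma-\alpha)}{c(\beta-\alpha)}$ at $x=-\alpha$ and $\frac{(1-\beta)(\gamma-\beta)}{c(\alpha-\beta)}$ at $x=-\beta$.

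First I would extract the \emph{symmetric} information. Matching polynomial parts in \eqref{eq1} gives $u_n+v_n=\frac{1-c}{c}$, so that $\sum_{j=0}^n(u_j+v_j)=(n+1)\frac{1-c}{c}$. Matching polynomial parts in \eqref{eq2} gives $(r_{n+1}+s_{n+1})-(r_n+s_n)=-1$; since $a_0=0$ forces $B_0\equiv0$ and hence $r_0=s_0=0$, this telescopes to $r_n+s_n=-n$. Matching the residues of \eqref{eq1} at the two poles yields
\[ r_n+r_{n+1}=-(\alpha+b_n)u_n+\sum_{j=0}^n u_j-\frac{(1-\alpha)(\gamma-\alpha)}{c(\beta-\alpha)}, \]
and the analogous relation for $s_n+s_{n+1}$ with $\alpha\leftrightarrow\beta$. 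Adding these two, using $\sum_{j=0}^n(u_j+v_j)=(n+1)\frac{1-c}{c}$ and substituting $r_n+s_n=-n$ together with $r_{n+1}+s_{n+1}=-(n+1)$, all of $r,s$ and the partial sums cancel and one is left with the single linear relation $\alpha u_n+\beta v_n=(2n+1)-\frac{1-c}{c}b_n+\frac{\alpha+\beta-\gamma-1}{c}+(n+1)\frac{1-c}{c}$. Together with $u_n+v_n=\frac{1-c}{c}$ this is a $2\times2$ linear system for $(u_n,v_n)$, solvable because $\alpha\neq\beta$, and solving it produces exactly \eqref{un} and \eqref{vn}.

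For $r_n$ and $s_n$ I would bring in the residues of \eqref{eq2} at $-\alpha$ and $-\beta$, namely $a_{n+1}^2 u_{n+1}-a_n^2 u_{n-1}=-(\alpha+b_n)r_{n+1}+(\alpha+b_n-1)r_n$ and its $\alpha\leftrightarrow\beta$ companion. These are the relations that carry the factor $a_n^2$ appearing in \eqref{rn}--\eqref{sn}. Combining such a relation with the residue relation $r_n+r_{n+1}=\cdots$ from \eqref{eq1} and the now-explicit closed form for $u_n$ (so that $\sum_{j=0}^n u_j$ is an explicit polynomial in $n$ together with a multiple of $\sum_{k=0}^{n}b_k$), I would eliminate $r_{n+1}$ so as to reach a first-difference relation of the shape $(\alpha-\beta)(r_{n+1}-r_n)=n+\beta+b_n-\frac{1-c}{c}\big(a_{n+1}^2-a_n^2\big)$. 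Summing from $0$ to $n-1$ with $r_0=0$ telescopes the $b$-term into $\sum_{k=0}^{n-1}b_k$ and the $a^2$-term into $a_n^2$, giving \eqref{rn}; then \eqref{sn} follows from $r_n+s_n=-n$.

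The main obstacle is this last step: the ``difference sector'' relations coming from \eqref{eq1} and \eqref{eq2} genuinely couple $r_n,s_n$ with $u_n,v_n$, with $a_n^2,b_n$, and with the partial sums, and the telescoping to the closed form only succeeds after the symmetric relations $u_n+v_n=\frac{1-c}{c}$ and $r_n+s_n=-n$, the explicit form of $u_n$, and the base value $a_0=0$ have all been fed in. The bookkeeping of the residues of the rational potential — keeping the constants $\frac{(1-\alpha)(\gamma-\alpha)}{c(\beta-\alpha)}$ and their signs straight under the $\alpha\leftrightarrow\beta$ symmetry — is where computational errors are most likely, but no individual step is conceptually deep once the residue/polynomial-part extraction is set up.
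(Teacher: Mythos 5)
Your overall strategy --- expand \eqref{eq1} and \eqref{eq2} as $x\to\infty$ and at the two simple poles $x=-\alpha$, $x=-\beta$, extract the symmetric relations $u_n+v_n=\frac{1-c}{c}$ and $r_n+s_n=-n$, and then solve $2\times2$ linear systems --- is exactly the paper's proof, and your derivation of \eqref{un}--\eqref{vn} (adding the two residue relations of \eqref{eq1} to obtain $\alpha u_n+\beta v_n$, then solving against $u_n+v_n=\frac{1-c}{c}$) coincides with it step for step, including the residue constants of the potential term.

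The one place where your description would not go through as written is the derivation of the difference relation for $r_n$. The target identity $(\alpha-\beta)(r_{n+1}-r_n)=n+\beta+b_n-\frac{1-c}{c}\big(a_{n+1}^2-a_n^2\big)$ is correct (it is precisely the forward difference of \eqref{rn}), but you propose to reach it by eliminating $r_{n+1}$ between the residue-at-$(-\alpha)$ relations of \eqref{eq1} and \eqref{eq2}, helped by the closed form of $u_n$. Doing that literally yields $r_n=\big(a_{n+1}^2u_{n+1}-a_n^2u_{n-1}+(\alpha+b_n)X\big)/\big(2(\alpha+b_n)-1\big)$ with $X=r_n+r_{n+1}$ known explicitly --- a direct but unwieldy formula, not a first difference, and matching it with \eqref{rn} would need identities you do not yet have. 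The derivation that works --- and is the paper's --- is to \emph{add} the residue relation of \eqref{eq2} at $-\alpha$ to its $\alpha\leftrightarrow\beta$ companion: the left-hand side becomes $\frac{1-c}{c}\big(a_{n+1}^2-a_n^2\big)$ by $u_n+v_n=\frac{1-c}{c}$, while the right-hand side organizes into $\alpha r_m+\beta s_m$ and $r_m+s_m$, so substituting $s_m=-m-r_m$ collapses everything to your stated difference relation; neither the \eqref{eq1} residues nor the explicit form of $u_n$ is needed for this step. After that, summing with $r_0=0$ and invoking $r_n+s_n=-n$ gives \eqref{rn}--\eqref{sn} exactly as you say.
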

Observe the symmetry $v_n(\alpha,\beta,\gamma,c) = u_n(\beta,\alpha,\gamma,c)$ and $s_n(\alpha,\beta,\gamma,c) = r_n(\beta,\alpha,\gamma,c)$,
which holds because~$a_n$ and~$b_n$ are invariant if we interchange $\alpha$ and $\beta$ (interchanging~$\alpha$ and~$\beta$ leaves the weights~$w_k$ unchanged).

\begin{proof}The identity \eqref{eq1} gives three equations by looking at what happens for $x\to \infty$, $-\alpha$, and $-\beta$. If we let $x \to \infty$ in~\eqref{eq1} then
\begin{gather} \label{eq11}
 u_n + v_n = \frac{1-c}{c} ,
\end{gather}
the residue at $x=-\alpha$ gives
\begin{gather} \label{eq12}
 r_n + r_{n+1} = - u_n(\alpha+b_n) - \frac{(\gamma-\alpha)(1-\alpha)}{c(\beta-\alpha)} + \sum_{j=0}^n u_j,
\end{gather}
and the residue at $x=-\beta$ gives
\begin{gather} \label{eq13}
 s_n + s_{n+1} = -v_n(\beta+b_n) - \frac{(\gamma-\beta)(1-\beta)}{c(\alpha-\beta)} + \sum_{j=0}^n v_j.
\end{gather}
In a similar way we get three equations from \eqref{eq2}: first let $x \to \infty$ to find
\begin{equation*}
 (r_{k+1}+s_{k+1}) - (r_k+s_k) = -1,
\end{equation*}
which after summation (and using $r_0+s_0=0$, which follows because $a_0=0$) gives
\begin{gather} \label{eq21}
 r_n + s_n = -n.
\end{gather}
The residue at $x=-\alpha$ for \eqref{eq2} gives
\begin{gather} \label{eq22}
 a_{n+1}^2 u_{n+1} - a_n^2 u_{n-1} = -r_{n+1} (\alpha+b_n) + r_n (\alpha +b_n-1),
\end{gather}
and the residue at $x=-\beta$ gives
\begin{gather} \label{eq23}
 a_{n+1}^2 v_{n+1} - a_n^2 v_{n-1} = -s_{n+1} (\beta+b_n) + s_n (\beta+b_n-1).
\end{gather}
Adding \eqref{eq12} and \eqref{eq13} while using \eqref{eq11} and \eqref{eq21} gives
\begin{gather} \label{eq31}
 \alpha u_n + \beta v_n = 2n+1 - \frac{1-c}{c}b_n + \frac{\alpha+\beta-\gamma-1}{c} + (n+1) \frac{1-c}{c}.
\end{gather}
Now we can solve the linear system \eqref{eq11} and \eqref{eq31} for $(u_n,v_n)$ and this gives the required expressions \eqref{un}--\eqref{vn}.
In a similar way, we add \eqref{eq22} and \eqref{eq23}, which together with \eqref{eq11} and \eqref{eq21} gives
\begin{equation*}
 \big(a_{n+1}^2 -a_n^2\big) \frac{1-c}{c} = b_n + n - (\alpha r_{n+1}+\beta s_{n+1}) + (\alpha r_n + \beta s_n).
\end{equation*}
Summing this then gives (taking into account that $a_0=0$)
\begin{gather} \label{eq32}
 \alpha r_n + \beta s_n = \frac{n(n-1)}{2} - \frac{1-c}{c} a_n^2 + \sum_{k=0}^{n-1} b_k.
\end{gather}
Now we can solve the linear system \eqref{eq21} and \eqref{eq32} for the unknowns $(r_n,s_n)$ to find the expressions \eqref{rn}--\eqref{sn}.
\end{proof}

\begin{Corollary} \label{cor1}The recurrence coefficients $\big(a_n^2,b_n\big)_{n \in \mathbb{N}}$ are given in terms of $(u_n,r_n)_{n \in \mathbb{N}}$ by
\begin{gather} \label{bnur}
b_n = \frac{n+\alpha-\gamma + (n+\beta)c}{1-c} - (\alpha-\beta) \frac{c}{1-c} u_n,\\
\label{anur} a_n^2 = \frac{n(n+\alpha+\beta-\gamma-1)c}{(1-c)^2} - (\alpha-\beta) \frac{c}{1-c} \left( \frac{c}{1-c} \sum_{j=0}^{n-1} u_j + r_n \right).
\end{gather}
\end{Corollary}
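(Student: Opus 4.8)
The corollary is a direct algebraic inversion of the formulas established in Proposition~\ref{prop1}, so the plan is to solve \eqref{un} for $b_n$ and \eqref{rn} for $a_n^2$, and then eliminate the summed $b_k$ in favour of the $u_j$. No new structural input is needed beyond the relations already proved; everything reduces to linear algebra and careful cancellation.

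First I would treat \eqref{un} as a linear equation in $b_n$. Isolating the $\frac{1-c}{c}b_n$ term and multiplying through by $\frac{c}{1-c}$ expresses $b_n$ as a combination of $u_n$ and an explicit function of $n,\alpha,\beta,\gamma,c$. The coefficient of $u_n$ is already $-(\alpha-\beta)\frac{c}{1-c}$, as required. Collecting all the $u_n$-free terms over the common denominator $1-c$, the numerator $c(2n+1)+(\alpha+\beta-\gamma-1)+(n+1-\beta)(1-c)$ regroups so that the $c$-linear and constant pieces collapse to $n+\alpha-\gamma+(n+\beta)c$, which yields \eqref{bnur}.

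For the second formula I would solve \eqref{rn} for $a_n^2$, obtaining
\begin{gather*}
 a_n^2 = \frac{c}{1-c}\left( \frac{n(n-1)}{2} + \beta n + \sum_{k=0}^{n-1} b_k - (\alpha-\beta) r_n \right).
\end{gather*}
The $r_n$ term immediately produces the $-(\alpha-\beta)\frac{c}{1-c}r_n$ contribution in \eqref{anur}. To handle $\sum_{k=0}^{n-1} b_k$ I substitute the expression \eqref{bnur} for each $b_k$ and split the sum into a purely explicit part $\frac{1}{1-c}\sum_{k=0}^{n-1}\big(k+\alpha-\gamma+(k+\beta)c\big)$ and the term $-(\alpha-\beta)\frac{c}{1-c}\sum_{k=0}^{n-1}u_k$. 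After the overall factor $\frac{c}{1-c}$, the latter becomes $-(\alpha-\beta)\frac{c}{1-c}\cdot\frac{c}{1-c}\sum_{j=0}^{n-1}u_j$, matching the $u_j$-sum inside the bracket of \eqref{anur}.

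The only real work is the explicit polynomial part, and I expect the bookkeeping there to be the main obstacle. Using $\sum_{k=0}^{n-1}k=\frac{n(n-1)}{2}$, the remaining explicit contributions are $\frac{n(n-1)}{2}+\beta n$ from \eqref{rn} together with $\frac{1}{1-c}\big(\frac{n(n-1)}{2}(1+c)+n(\alpha-\gamma)+n\beta c\big)$ from the sum. Placing these over $1-c$, the $c$-dependent halves of the two $\frac{n(n-1)}{2}$ contributions cancel against each other and the $\beta n c$ terms cancel, leaving the clean numerator $n(n+\alpha+\beta-\gamma-1)$. Multiplying by the outer $\frac{c}{1-c}$ then gives exactly $\frac{n(n+\alpha+\beta-\gamma-1)c}{(1-c)^2}$, completing \eqref{anur}.
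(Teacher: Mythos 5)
Your proposal is correct and takes essentially the same route as the paper: solve \eqref{un} for $b_n$ to obtain \eqref{bnur}, then sum that formula and substitute the resulting expression for $\beta n+\sum_{k=0}^{n-1}b_k$ into \eqref{rn} solved for $a_n^2$. The cancellations you describe (the $\pm c$ halves of the two $\frac{n(n-1)}{2}$ terms and the $\beta n c$ terms) check out and yield the numerator $n(n+\alpha+\beta-\gamma-1)$ exactly as in \eqref{anur}.
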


\begin{proof}The formula \eqref{bnur} follows immediately from \eqref{un}. Summing \eqref{bnur} gives
\begin{gather*} \beta n + \sum_{k=0}^{n-1} b_k = \frac{n(n-1)}2 \frac{1+c}{1-c} + \frac{n(\alpha+\beta-\gamma)}{1-c} - (\alpha-\beta) \frac{c}{1-c} \sum_{j=0}^{n-1} u_j, \end{gather*}
and if we use this in \eqref{rn}, then we find
\begin{gather*} \frac{1-c}{c} a_n^2 = \frac{n(n+\alpha+\beta-\gamma-1)}{1-c} - (\alpha-\beta) \left( \frac{c}{1-c} \sum_{j=0}^{n-1} u_j + r_n \right), \end{gather*}
from which \eqref{anur} follows immediately.
\end{proof}

Note that for $\alpha=\gamma$ the weights $w_k$ become Meixner weights, and if we use the recurrence coefficients in~\eqref{Meixner}, then one finds that $u_n=0=r_n$ for all $n \in \mathbb{N}$. The restriction that $\alpha \neq \beta$ in Proposition~\ref{prop1} is not needed but is an artifact of our choice of taking $u_n$ and $r_n$ as the basic sequences. In fact, when $\alpha=\beta$ the discrete potential $u$ has a double pole, resulting in a double pole for~$A_n$ and~$B_n$ as well. In the next section we will use new variables~$x_n$ and~$y_n$ for which the case $\alpha=\beta$ needs no separate analysis.

\section{New variables} \label{sec3}
Recall that
\begin{gather*} \frac{A_n}{a_n} = \frac{u_n}{x+\alpha} + \frac{v_n}{x+\beta} = \frac{(u_n+v_n)x+\beta u_n + \alpha v_n}{(x+\alpha)(x+\beta)} , \end{gather*}
and
\begin{gather*} B_n = \frac{r_n}{x+\alpha} + \frac{s_n}{x+\beta} = \frac{(r_n+s_n)x+\beta r_n + \alpha s_n}{(x+\alpha)(x+\beta)} . \end{gather*}
We already found that $u_n+v_n = \frac{1-c}{c}$, see \eqref{eq11}, and $r_n+s_n= -n$, see~\eqref{eq21}, so we now take new variables
\begin{gather*} x_n=\frac{c}{1-c}(\beta u_n+ \alpha v_n) , \qquad y_n = \beta r_n+\alpha s_n , \end{gather*}
and thus use
\begin{gather} \label{ABxy}
 \frac{A_n}{a_n} = \frac{1-c}{c} \frac{x + x_n}{(x+\alpha)(x+\beta)}, \qquad B_n = \frac{-nx+y_n}{(x+\alpha)(x+\beta)}.
\end{gather}
The advantage of using the unknowns $(x_n,y_n)$ is that these are symmetric in $\alpha$ and $\beta$: they remain unchanged if one interchanges $\alpha$ and $\beta$. We are also going to use one more compatibility relation between the $A_n$ and $B_n$. We already know
\begin{gather} \label{BBA}
 B_{n+1}(x)+B_n(x) = (x-b_n) \frac{A_n(x)}{a_n} - u(x+1) + \sum_{k=0}^n \frac{A_k(x)}{a_k},
\end{gather}
and
\begin{gather} \label{AAB}
 a_{n+1}^2 \frac{A_{n+1}(x)}{a_{n+1}} - a_n^2 \frac{A_{n-1}(x)}{a_{n-1}} = (x-b_n)\bigl(B_{n+1}(x)-B_n(x)\bigr) + 1 - B_n(x).
\end{gather}
Multiply \eqref{AAB} by $A_n/a_n$, then
\begin{gather*} a_{n+1}^2 \frac{A_{n+1}A_n}{a_{n+1}a_n} - a_n^2 \frac{A_nA_{n-1}}{a_na_{n-1}} = (x-b_n) \frac{A_n}{a_n} ( B_{n+1}-B_n ) + (1-B_n) \frac{A_n}{a_n}. \end{gather*}
Replace $(x-b_n)A_n/a_n$ by using \eqref{BBA}, then
\begin{gather*}
 a_{n+1}^2 \frac{A_{n+1}A_n}{a_{n+1}a_n} - a_n^2 \frac{A_nA_{n-1}}{a_na_{n-1}}\\
 \qquad{} = B_{n+1}^2 - B_n^2 +\ (B_{n+1}-B_n) \left( u(x+1) - \sum_{k=0}^n \frac{A_k}{a_k} \right) + (1-B_n) \frac{A_n}{a_n} .
\end{gather*}
Summing from $0$ to $n-1$, taking into account that $A_{-1}=0$ and $B_0=0$, gives
\begin{gather*} a_n^2 \frac{A_nA_{n-1}}{a_na_{n-1}} = B_n^2 + B_n u(x+1) - \sum_{k=0}^{n-1} (B_{k+1}-B_k) \sum_{j=0}^k \frac{A_j}{a_j} + \sum_{k=0}^{n-1} \frac{A_k}{a_k} - \sum_{k=0}^{n-1} B_k \frac{A_k}{a_k} . \end{gather*}
Use summation by parts (for $f_0=0$)
\begin{gather} \label{sumpart}
 \sum_{k=0}^{n-1} (g_{k+1}-g_k) f_k = g_nf_{n-1} - \sum_{k=1}^{n-1} g_k(f_k-f_{k-1})
\end{gather}
to find our third compatibility relation
\begin{gather} \label{AABB}
 a_n^2 \frac{A_n(x)A_{n-1}(x)}{a_na_{n-1}} = B_n^2(x) + u(x+1)B_n(x) + \bigl( 1-B_n(x) \bigr) \sum_{k=0}^{n-1} \frac{A_k(x)}{a_k} .
\end{gather}
Compare this with \cite[equation~(4.8)]{WVA} for the ladder operators corresponding to the differential operator.

Now let us find some relations for the unknown sequences $\big(a_n^2,b_n\big)_n$ and $(x_n,y_n)_n$. If we use~\eqref{ABxy} in~\eqref{BBA} and multiply everything by $(x+\alpha)(x+\beta)$, then we find a quadratic expression in $x$ for which the quadratic term vanishes, so that we only have a~linear term in $x$ and a constant term. The coefficient of $x$ gives the identity
\begin{gather} \label{bx}
 b_n = x_n + \frac{n+(n+ \alpha+\beta)c-\gamma}{1-c},
\end{gather}
which corresponds to \eqref{bnur} in Corollary \ref{cor1}. The constant term gives
\begin{gather} \label{yyx}
 y_{n+1}+y_n = -\frac{1-c}c b_nx_n + \alpha\beta - \frac{\gamma}{c} + \frac{1-c}{c} \sum_{k=0}^n x_k .
\end{gather}
Next we use \eqref{ABxy} in \eqref{AAB} and multiply everything by $(x+\alpha)(x+\beta)$. Again this gives a quadratic equation in $x$ in which the quadratic term vanishes. The linear term gives
\begin{gather} \label{difadify}
 \frac{1-c}{c} \big(a_{n+1}^2-a_n^2\big) = y_{n+1}-y_n +b_n+\alpha+\beta+n.
\end{gather}
Summing from $0$ to $n-1$ and using \eqref{bx} gives
\begin{gather} \label{aysumx}
 \frac{1-c}{c} a_n^2 = y_n + \sum_{k=0}^{n-1} x_k + \frac{n(n+\alpha+\beta-\gamma-1)}{1-c},
\end{gather}
which corresponds to \eqref{anur} in Corollary~\ref{cor1}. The constant term gives
\begin{gather} \label{difaxdify}
 \frac{1-c}{c} \big(a_{n+1}^2x_{n+1} -a_n^2 x_{n-1}\big) = -b_n(y_{n+1}-y_n) + \alpha\beta -y_n.
\end{gather}
Multiply this by $\frac{1-c}{c}x_n$ and use \eqref{yyx} to eliminate $b_nx_n$, then
\begin{gather*}
 \frac{(1-c)^2}{c^2} \big(a_{n+1}^2 x_{n+1}x_n-a_n^2x_nx_{n-1}\big)\\
 \qquad{} = y_{n+1}^2-y_n^2 - \left( \alpha\beta-\frac{\gamma}{c} \right) (y_{n+1}-y_n)
 - (y_{n+1}-y_n) \frac{1-c}{c} \sum_{k=0}^n x_k + \frac{1-c}{c} x_n( \alpha\beta - y_n) .
\end{gather*}
Summing and using summation by parts \eqref{sumpart} then gives
\begin{gather} \label{xxy}
\frac{(1-c)^2}{c^2} a_n^2 x_n x_{n-1} = y_n\left( y_n-\alpha\beta + \frac{\gamma}{c} \right)-(y_n-\alpha\beta)\frac{1-c}{c} \sum_{k=0}^{n-1}x_k.
\end{gather}
Finally, we use \eqref{ABxy} in \eqref{AABB} and multiply everything by $(x+\alpha)^2(x+\beta)^2$. This gives a cubic equation in~$x$ in which the cubic term vanishes. The quadratic term gives~\eqref{aysumx} again. The linear term gives
\begin{gather}
 \frac{(1-c)^2}{c^2} a_n^2 (x_n+x_{n-1}) = -y_n \left( n \frac{1+c}{c} + \alpha + \beta - \frac{\gamma+1}{c} \right) + \frac{(\alpha\beta-\gamma)n}{c} \nonumber\\
 \hphantom{\frac{(1-c)^2}{c^2} a_n^2 (x_n+x_{n-1}) =}{} + (\alpha+\beta+n) \frac{1-c}{c} \sum_{k=0}^{n-1} x_k, \label{xxyy}
\end{gather}
and the constant term gives \eqref{xxy} again.

So now we have equations \eqref{bx} and \eqref{aysumx} to express the recurrence coefficients $a_n^2$ and $b_n$ in terms of the sequences $(x_n,y_n)_n$. Furthermore we will use~\eqref{yyx},~\eqref{xxy} and~\eqref{xxyy} to find a~system of recurrence relations for $(x_n,y_n)_n$. Note that these three equations contain the sum $\sum\limits_{k=0}^{n-1} x_k$ which we will eliminate from these equations so that we are left with a system of two first order equations for $(x_n,y_n)_n$.

\begin{Theorem} \label{thm1} The sequences $(x_n,y_n)$ can be computed recursively using
\begin{gather}
\big( y_n-\alpha\beta+ (\alpha+\beta+n)x_n - x_n^2 \big) \big( y_{n+1}-\alpha\beta+(\alpha+\beta+n+1)x_n - x_n^2 \big) \nonumber\\
 \qquad{} = \frac{1}{c} ( x_n-1 ) ( x_n-\alpha ) ( x_n-\beta ) ( x_n-\gamma),\label{dP1}
\end{gather}
and
\begin{gather}
(x_n + Y_n)(x_{n-1} + Y_n) \nonumber\\
=\frac{(y_n+n\alpha) (y_n+n\beta) \bigl( y_n +n\gamma -(\gamma-\alpha)(\gamma-\beta) \bigr) \bigl( y_n + n - (1-\alpha)(1-\beta) \bigr)}
{\bigl( y_n(2n+\alpha+\beta-\gamma-1) + n\bigl( (n+\alpha+\beta)(n+\alpha+\beta-\gamma-1) -\alpha\beta + \gamma \bigr)\bigr)^2} ,\label{dP2}
\end{gather}
where
\begin{gather*} Y_n = \frac{y_n^2 + y_n \bigl( n(n+\alpha+\beta-\gamma-1) -\alpha\beta+\gamma \bigr) - \alpha\beta n(n+\alpha+\beta-\gamma-1)}
 {y_n(2n+\alpha+\beta-\gamma-1) + n\bigl( (n+\alpha+\beta)(n+\alpha+\beta-\gamma-1) -\alpha\beta + \gamma\bigr)}. \end{gather*}
The initial values are given by
\begin{gather*} y_0=0, \qquad
 x_0 = \frac{c\alpha\beta}{\gamma} \frac{{}_2F_1(\alpha+1,\beta+1;\gamma+1;c)}{{}_2F_1(\alpha,\beta;\gamma;c)} - \frac{(\alpha+\beta)c-\gamma}{1-c}. \end{gather*}
\end{Theorem}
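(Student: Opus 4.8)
The plan is to establish \eqref{dP1} and \eqref{dP2} separately; together they furnish the recursion, since \eqref{dP1} is affine in $y_{n+1}$ (so it returns $y_{n+1}$ from $(x_n,y_n)$) and \eqref{dP2}, read at index $n+1$, is affine in $x_{n+1}$ (so it returns $x_{n+1}$ from $(x_n,y_{n+1})$). For the initial data I would use $a_0=0$: at $n=0$ equation \eqref{aysumx} reads $\frac{1-c}{c}a_0^2=y_0$, whence $y_0=0$, while \eqref{bx} gives $x_0=b_0-\frac{(\alpha+\beta)c-\gamma}{1-c}$. Here $b_0=m_1/m_0=c\frac{{\rm d}}{{\rm d}c}\log{}_2F_1(\alpha,\beta;\gamma;c)$, and the contiguous derivative $\frac{{\rm d}}{{\rm d}c}{}_2F_1(\alpha,\beta;\gamma;c)=\frac{\alpha\beta}{\gamma}{}_2F_1(\alpha+1,\beta+1;\gamma+1;c)$ turns this into the stated $x_0$.

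For \eqref{dP1} I would bypass the elimination of the sum and work with the ladder relations directly. Denote by $L_n$ and $M_n$ the two factors on the left of \eqref{dP1}. Using the explicit $B_n$ from \eqref{ABxy} one verifies $L_n=(x_n-\alpha)(x_n-\beta)\bigl(B_n(-x_n)-1\bigr)$ and $M_n=(x_n-\alpha)(x_n-\beta)\bigl(B_{n+1}(-x_n)-1\bigr)$, the point being that $A_n/a_n$ in \eqref{ABxy} vanishes at $x=-x_n$. Evaluating \eqref{BBA} and \eqref{AABB} at $x=-x_n$, where the factor $A_n(-x_n)=0$ annihilates several terms, yields two identities among $B_n(-x_n)$, $B_{n+1}(-x_n)$, the partial sum $\sum_{k=0}^{n-1}A_k(-x_n)/a_k$ and $u(-x_n+1)$; eliminating the sum collapses them to $\bigl(1-B_n(-x_n)\bigr)\bigl(1-B_{n+1}(-x_n)\bigr)=1+u(-x_n+1)$. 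Since $1+u(-x_n+1)=\frac{(x_n-1)(x_n-\gamma)}{c(x_n-\alpha)(x_n-\beta)}$, forming the product $L_nM_n=(x_n-\alpha)^2(x_n-\beta)^2\bigl(1+u(-x_n+1)\bigr)$ reproduces \eqref{dP1} exactly.

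For \eqref{dP2} I would follow the route flagged before the theorem and eliminate $\sum_{k=0}^{n-1}x_k$. Solving \eqref{aysumx} for that sum and inserting it into \eqref{xxy} and \eqref{xxyy}, then carrying the emerging $a_n^2$-terms to the left, recasts these two relations as $\frac{(1-c)^2}{c^2}a_n^2\bigl(x_nx_{n-1}+y_n-\alpha\beta\bigr)=\frac{1}{c}N_n$ and $\frac{(1-c)^2}{c^2}a_n^2\bigl(x_n+x_{n-1}-(\alpha+\beta+n)\bigr)=-\frac{1}{c}D_n$, where $N_n$ and $D_n$ are exactly the numerator and denominator of the stated $Y_n$. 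Thus $x_nx_{n-1}=\alpha\beta-y_n+\frac{c}{(1-c)^2a_n^2}N_n$ and $x_n+x_{n-1}=(\alpha+\beta+n)-\frac{c}{(1-c)^2a_n^2}D_n$, and expanding $(x_n+Y_n)(x_{n-1}+Y_n)=x_nx_{n-1}+Y_n(x_n+x_{n-1})+Y_n^2$ the $a_n^2$-dependent parts cancel precisely when $Y_n=N_n/D_n$. What survives is $(x_n+Y_n)(x_{n-1}+Y_n)=\alpha\beta-y_n+(\alpha+\beta+n)Y_n+Y_n^2$, a rational function of $y_n$ alone.

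It then remains to confirm the polynomial identity $(\alpha\beta-y_n)D_n^2+(\alpha+\beta+n)N_nD_n+N_n^2=(y_n+n\alpha)(y_n+n\beta)\bigl(y_n+n\gamma-(\gamma-\alpha)(\gamma-\beta)\bigr)\bigl(y_n+n-(1-\alpha)(1-\beta)\bigr)$, which is \eqref{dP2} after clearing $D_n^2$. This is the main obstacle: both sides are monic quartics in $y_n$ with coefficients polynomial in $\alpha,\beta,\gamma,n$, so the verification reduces to matching the remaining coefficients, but the algebra is heavy and recognising the four linear factors on the right is the delicate step. By comparison, \eqref{dP1} is short once the reformulation through $B_n(-x_n)$ is noticed.
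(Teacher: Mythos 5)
Your proposal is correct, and the two halves merit different comparisons with the paper. For \eqref{dP2} you follow essentially the paper's route: the authors likewise use \eqref{aysumx} to eliminate the sum $S_n$ from \eqref{xxy} and \eqref{xxyy}, arriving at a single relation linear in $x_nx_{n-1}$ and $x_n+x_{n-1}$ with coefficients $D_n$ and $N_n$, which they then ``factor'' into \eqref{dP2} by what they call lengthy calculus; your reorganization (keeping $a_n^2$ visible and letting the choice $Y_n=N_n/D_n$ cancel it) is the same elimination, and the quartic identity you isolate at the end, $(\alpha\beta-y_n)D_n^2+(\alpha+\beta+n)N_nD_n+N_n^2=(y_n+n\alpha)(y_n+n\beta)(y_n+n\gamma-(\gamma-\alpha)(\gamma-\beta))(y_n+n-(1-\alpha)(1-\beta))$, is exactly the coefficient-matching the paper leaves implicit, so you are no less complete than the source. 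For \eqref{dP1}, however, your route is genuinely different and cleaner. The paper combines \eqref{yyx}, \eqref{xxy}, \eqref{xxyy}, \eqref{bx} and \eqref{aysumx} to eliminate $b_nx_ny_n$ and $S_n$ and then factors the resulting expression by inspection. You instead evaluate the compatibility relations \eqref{BBA} and \eqref{AABB} at the zero $x=-x_n$ of $A_n$, where the left-hand side of \eqref{AABB} vanishes; eliminating the partial sum gives $(1-B_n(-x_n))(1-B_{n+1}(-x_n))=1+u(1-x_n)$, and since $(x_n-\alpha)(x_n-\beta)(B_n(-x_n)-1)$ and $(x_n-\alpha)(x_n-\beta)(B_{n+1}(-x_n)-1)$ are, by \eqref{ABxy}, precisely the two factors on the left of \eqref{dP1} while $1+u(1-x_n)=\frac{(x_n-1)(x_n-\gamma)}{c(x_n-\alpha)(x_n-\beta)}$, the product form of \eqref{dP1} appears with no factorization step at all. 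This buys conceptual transparency (it explains \emph{why} the right-hand side of \eqref{dP1} is $c^{-1}$ times the product of $x_n$ minus the four exponents) at the cost of a mild genericity assumption: the evaluation at $x=-x_n$ requires $x_n\notin\{\alpha,\beta\}$, which holds generically and extends by continuity, and one should note that the $k=n$ term of the sum in \eqref{BBA} drops out at $x=-x_n$ so that both sums coincide, as you implicitly use. Your treatment of the initial values coincides with the paper's.
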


\begin{proof}Multiply \eqref{yyx} by $y_n$ to find
\begin{gather} \label{bxy1}
 y_{n+1}y_n + y_n^2 = -\frac{1-c}{c} b_nx_ny_n + \alpha\beta y_n - \frac{\gamma}{c}y_n + y_n \frac{1-c}{c} S_n + \frac{1-c}{c}x_ny_n,
\end{gather}
where from now on we write
\begin{gather*} S_n = \sum_{k=0}^{n-1} x_k . \end{gather*}
Multiply \eqref{xxyy} by $x_n$ and use \eqref{bx} to find
\begin{gather*} \frac{1-c}{c} a_n^2\big(x_n^2+x_nx_{n-1}\big) = - b_nx_ny_n + x_n^2y_n + \frac{x_ny_n}{1-c} + \frac{(\alpha\beta-\gamma)nx_n}{1-c}
+ (\alpha+\beta+n)x_n S_n. \end{gather*}
Use \eqref{xxy} to remove $a_n^2 x_nx_{n-1}$ and then \eqref{aysumx} to remove the remaining $a_n^2$ to find
\begin{gather}
 x_n^2 \left( S_n + \frac{n(n+\alpha+\beta-\gamma-1)}{1-c}\right) + \frac{c}{1-c} y_n \left(y_n -\alpha\beta + \frac{\gamma}{c}\right)
 - (y_n-\alpha\beta) S_n \nonumber\\
\qquad{} = - b_nx_ny_n + \frac{x_ny_n}{1-c} + \frac{(\alpha\beta-\gamma)nx_n}{1-c} + (\alpha+\beta+n)x_nS_n.\label{bxy2}
\end{gather}
Eliminate $b_nx_ny_n$ from \eqref{bxy1} and \eqref{bxy2} to find
\begin{gather}
 y_{n+1}y_n - \frac{n(n+\alpha+\beta-\gamma-1)}{c} x_n^2 + x_ny_n + \frac{(\alpha\beta-\gamma)n x_n}{c} \nonumber\\
\qquad{} = \big( \alpha\beta - (\alpha+\beta+n)x_n + x_n^2 \big) \frac{1-c}{c} S_n.\label{temp}
\end{gather}
Now use \eqref{yyx} and \eqref{bx} to find
\begin{gather*} \frac{1-c}{c} S_n = y_{n+1}+y_n + \frac{1-c}{c}x_n \left( x_n + \frac{n+(n+ \alpha+\beta)c-\gamma}{1-c} \right) -\alpha\beta + \frac{\gamma}{c} - \frac{1-c}{c} x_n \end{gather*}
and use this to replace the sum $S_n$ in \eqref{temp}. This gives an expression containing $x_n$ and the terms $y_{n+1}+y_n$ and $y_ny_{n+1}$. Some calculus shows that it can be factored as in \eqref{dP1}.

Next, use \eqref{aysumx} to replace $a_n^2$ in \eqref{xxy} and \eqref{xxyy}. Then one can eliminate the sum $S_n = \sum\limits_{k=0}^{n-1} x_k$ from both equations and find
\begin{gather*}
 x_nx_{n-1} \bigl( y_n(2n+\alpha+\beta-\gamma-1) + n \bigl( (n+\alpha+\beta)(n+\alpha+\beta-\gamma-1)-\alpha\beta + \gamma\bigr) \bigr) \\
 \quad\quad{} + (x_n+x_{n-1}) \bigl( y_n^2 + y_n \bigl( n(n+\alpha+\beta-\gamma-1) -\alpha\beta+\gamma \bigr) - \alpha\beta n(n+\alpha+\beta-\gamma-1) \bigr) \\
\quad{} = y_n^2(-n+\gamma+1) + y_n (2\alpha\beta n + \alpha\gamma + \beta \gamma - \alpha\beta\gamma - \alpha\beta) - \alpha\beta(\alpha\beta-\gamma)n .
\end{gather*}
This is an equation containing $y_n$ and the sum $x_n+x_{n-1}$ and product $x_nx_{n-1}$. Some (lengthy) calculus shows it can be factored as in~\eqref{dP2}.

The initial values are $a_0^2=0$ and $b_0 = m_1/m_0$, where $m_0={}_2F_1(\alpha,\beta;\gamma;c)$ and $m_1= \frac{c\alpha\beta}{\gamma} {}_2F_1(\alpha+1,\beta+1;\gamma+1;c)$, which gives
\begin{gather*} y_0=0, \qquad x_0 = \frac{m_1}{m_0} - \frac{(\alpha+\beta)c-\gamma}{1-c}. \tag*{\qed} \end{gather*}\renewcommand{\qed}{}
\end{proof}

\looseness=-1 The system \eqref{dP1}--\eqref{dP2} is a system of two first order recurrence equations for $x_n$, $y_n$ and is a~discrete Painlev\'e equation, similar to d-P$\big(E_6^{(1)}/A_2^{(1)}\big)$ in \cite[Section~8.1.15]{KNY} or \cite[$E_6^{\delta}$ on p.~296]{GR}, except for a quadratic term $x_n^2$ on the left of~\eqref{dP1} and the rational term~$Y_n$ on the left of~\eqref{dP2}.

\section{The Toda lattice} \label{sec5}
If we put $c=c_0 e^t$ then the weight \eqref{hyperw} is an exponential modification of the weight with $c=c_0$ for $t=0$, and this deformation
(with deformation parameter $t$) corresponds to a Toda flow. The recurrence coefficients $a_n^2(t)$ and $b_n(t)$, as functions of the deformation parameter $t$, then satisfy the Toda equations
\begin{gather*}
 \frac{{\rm d}}{{\rm d}t} a_n^2 = a_n^2(b_n-b_{n-1}), \qquad n \geq 1, \\
 \frac{{\rm d}}{{\rm d}t} b_n = a_{n+1}^2 - a_n^2, \qquad n \geq 0,
\end{gather*}
see, e.g., \cite[Section~2.8]{Ismail} or \cite[Section~3.2.2]{WVA}. In the variable $c$, these Toda equations become
\begin{gather}
 c \frac{{\rm d}}{{\rm d}c} a_n^2= a_n^2(b_n-b_{n-1}), \qquad n \geq 1, \label{Toda1} \\
 c \frac{{\rm d}}{{\rm d}c} b_n= a_{n+1}^2 - a_n^2, \qquad n \geq 0. \label{Toda2}
\end{gather}
For the sequences $x_n$ and $y_n$ we then have:

\begin{Proposition} \label{prop2} The derivatives of $x_n$ and $y_n$ with respect to the parameter $c$ are given by
\begin{gather} \label{xder}
 x_n' = b_n' - \frac{2n+\alpha+\beta-\gamma}{(1-c)^2},
\end{gather}
and
\begin{gather} \label{yder}
 y_n' = - \frac{1+c}{c^2} a_n^2 + \frac{1-c}{c} \big(a_n^2\big)' ,
\end{gather}
where $'$ denotes derivation with respect to $c$. Furthermore the Toda equations for $(x_n,y_n)$ are
\begin{gather}
 (1-c) x_n'= y_{n+1}-y_n + x_n, \label{xToda} \\
 (1-c) y_n'= \frac{(1-c)^2}{c^2} a_n^2 (x_n-x_{n-1}), \qquad n \geq 0. \label{yToda}
\end{gather}
\end{Proposition}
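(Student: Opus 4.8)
The plan is to establish the two derivative formulas \eqref{xder} and \eqref{yder} first, and then to deduce the Toda equations \eqref{xToda} and \eqref{yToda} from them, using the known Toda flow \eqref{Toda1}--\eqref{Toda2} for $a_n^2$ and $b_n$ at each stage. This matches the order in which the four statements are listed, and each derivation is essentially a substitution followed by algebraic simplification.

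The formula \eqref{xder} comes straight out of \eqref{bx}, which I read as $x_n = b_n - \frac{n+(n+\alpha+\beta)c-\gamma}{1-c}$. Differentiating with respect to $c$, a one-line computation gives $\frac{{\rm d}}{{\rm d}c}\frac{n+(n+\alpha+\beta)c-\gamma}{1-c}=\frac{2n+\alpha+\beta-\gamma}{(1-c)^2}$, which is exactly the subtracted term; no input from the Toda flow is needed here.

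For \eqref{yder} I would write \eqref{aysumx} as $y_n=\frac{1-c}{c}a_n^2-S_n-\frac{n(n+\alpha+\beta-\gamma-1)}{1-c}$ with $S_n=\sum_{k=0}^{n-1}x_k$, and differentiate. The only nontrivial term is $S_n'=\sum_{k=0}^{n-1}x_k'$: substituting \eqref{xder} splits it into $\sum_{k=0}^{n-1}b_k'$ and the explicit sum $\sum_{k=0}^{n-1}\frac{2k+\alpha+\beta-\gamma}{(1-c)^2}=\frac{n(n+\alpha+\beta-\gamma-1)}{(1-c)^2}$. The key move is to apply \eqref{Toda2} in telescoped form, $\sum_{k=0}^{n-1}b_k'=\frac1c\sum_{k=0}^{n-1}\big(a_{k+1}^2-a_k^2\big)=\frac{a_n^2}{c}$ (using $a_0^2=0$). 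The two explicit $(1-c)^{-2}$ sums then cancel, and collecting the $a_n^2$ terms through $-\frac1{c^2}-\frac1c=-\frac{1+c}{c^2}$ yields \eqref{yder}. This bookkeeping of the cumulative sum $S_n$ and its telescoping derivative is the one place that needs care, so I expect it to be the main (though still routine) obstacle.

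The Toda equations then follow by substitution. For \eqref{xToda}, insert $b_n'=\frac1c\big(a_{n+1}^2-a_n^2\big)$ from \eqref{Toda2} into \eqref{xder}, multiply by $1-c$, and replace $\frac{1-c}{c}\big(a_{n+1}^2-a_n^2\big)$ by $y_{n+1}-y_n+b_n+\alpha+\beta+n$ via \eqref{difadify}; the leftover $b_n+\alpha+\beta+n-\frac{2n+\alpha+\beta-\gamma}{1-c}$ collapses to $x_n$ once one uses $b_n-x_n=\frac{n+(n+\alpha+\beta)c-\gamma}{1-c}$ from \eqref{bx}. For \eqref{yToda}, substitute $(a_n^2)'=\frac1c a_n^2(b_n-b_{n-1})$ from \eqref{Toda1} into \eqref{yder} and multiply by $1-c$; it then remains to check $(b_n-b_{n-1})-(x_n-x_{n-1})=\frac{1+c}{1-c}$, which is immediate from \eqref{bx} since the $n$-dependence telescopes. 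For $n=0$ both sides of \eqref{yToda} vanish because $a_0^2=0$ and $y_0\equiv0$, so the identity holds for all $n\ge0$.
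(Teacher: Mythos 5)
Your proof is correct and follows essentially the same route as the paper: differentiate \eqref{bx} to get \eqref{xder}, differentiate \eqref{aysumx} and telescope $\sum b_k'$ via \eqref{Toda2} to get \eqref{yder}, then substitute the Toda flow back in to obtain \eqref{xToda}--\eqref{yToda}. The only (immaterial) variation is that for \eqref{xToda} you route through \eqref{difadify} and \eqref{bx} where the paper invokes \eqref{aysumx} directly, but \eqref{difadify} is just the difference form of \eqref{aysumx}, so the computation is the same.
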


\begin{proof} If we take the derivative with respect to $c$ in \eqref{bx} then we find \eqref{xder}. In a similar way we take the derivative in \eqref{aysumx} and \eqref{xder} to find
\begin{gather*} y_n' = - \frac{a_n^2}{c^2} + \frac{1-c}{c} \big(a_n^2\big)' - \sum_{k=0}^{n-1} b_k'. \end{gather*}
Now use the Toda equation \eqref{Toda2} to find
\begin{gather*} \sum_{k=0}^{n-1} b_k' = \frac{1}{c} \sum_{k=0}^{n-1} \big(a_{k+1}^2 -a_k^2\big) = \frac{a_n^2}{c}, \end{gather*}
where we used $a_0^2=0$. This gives \eqref{yder}. If we use \eqref{Toda2}, then \eqref{xder} becomes
\begin{gather*} x_n' = \frac{a_{n+1}^2-a_n^2}{c} - \frac{2n+\alpha+\beta-\gamma}{(1-c)^2}, \end{gather*}
which after using \eqref{aysumx} gives \eqref{xToda}. If we use \eqref{Toda1}, then \eqref{yder} becomes
\begin{gather*} y_n' = - \frac{1+c}{c^2} a_n^2 + \frac{1-c}{c^2} a_n^2 (b_n-b_{n-1}), \end{gather*}
and \eqref{bx} then gives \eqref{yToda}.
\end{proof}

\section{Painlev\'e VI} \label{sec6}

By combining the Toda equations \eqref{xToda}--\eqref{yToda} with the discrete Painlev\'e equations \eqref{dP1}--\eqref{dP2} one can in principle
find a differential equation for $x_n$ and $y_n$ as a function of the variable $c$, which after a suitable transformation can be reduced to
Painlev\'e VI. This approach is rather cumbersome and we were able to work it out by using computer algebra. However, we will present here another approach which gives an easier differential equation for $S_n = \sum\limits_{k=0}^{n-1} x_k$.

\begin{Theorem} \label{thm2} If we put $\sigma_n = (c-1)S_n+Kc+L$, with
\begin{gather*}
 K = \alpha\beta - \frac14 (\alpha+\beta+n)^2, \\
 L = \frac14 \bigl( (\alpha+\beta+\gamma+1)n + \alpha^2+\beta^2 - (\alpha+\beta)(\gamma+1)+2\gamma \bigr),
\end{gather*}
then $\sigma_n$ satisfies the Painlev\'e VI $\sigma$-equation
\begin{gather}
 \sigma_n' \bigl[ c(c-1) \sigma_n''\bigr]^2 + \bigl[ \sigma_n' \bigl(2\sigma_n - (2c-1) \sigma_n'\bigr) + d_1d_2d_3d_4\bigr]^2\nonumber \\
\qquad{} = \big(\sigma_n'+d_1^2\big)\big(\sigma_n'+d_2^2\big) (\sigma_n'+d_3^2\big)\big(\sigma_n'+d_4^2\big) , \label{sPVI}
\end{gather}
with parameters
\begin{gather*} d_1= \frac{n+\alpha-\beta}{2}, \!\!\qquad d_2=\frac{-n+\alpha-\beta}{2},\!\!\qquad d_3=\frac{n+\alpha+\beta-2}{2}, \!\!\qquad d_4= \frac{n+\alpha+\beta-2\gamma}{2}. \end{gather*}
\end{Theorem}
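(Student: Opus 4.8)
The plan is to take $S_n=\sum_{k=0}^{n-1}x_k$ as the basic unknown and turn the two Toda equations of Proposition~\ref{prop2} into a single second order ODE for $S_n$, which after the affine change of variable defining $\sigma_n$ becomes \eqref{sPVI}. First I would sum the Toda equation \eqref{xToda} from $0$ to $n-1$: since the right-hand side telescopes, $\sum_{k=0}^{n-1}(y_{k+1}-y_k)=y_n$ (using $y_0=0$) and $\sum_{k=0}^{n-1}x_k=S_n$, this gives $(1-c)S_n'=y_n+S_n$. Differentiating the definition $\sigma_n=(c-1)S_n+Kc+L$ and eliminating $S_n'$ with this relation then yields the two clean identities $\sigma_n'=K-y_n$ and $\sigma_n''=-y_n'$. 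Thus $\sigma_n$ is, up to the additive constant $K$, essentially $-y_n$, and its derivatives are governed directly by the Toda flow.

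Next I would render every remaining quantity as a rational function of $\sigma_n$, $\sigma_n'$ and $c$. From \eqref{aysumx} together with $(1-c)S_n'=y_n+S_n$ one gets $a_n^2=cS_n'+cn(n+\alpha+\beta-\gamma-1)/(1-c)^2$, while $S_n$ itself is recovered from the definition of $\sigma_n$. The decisive step is the second Toda equation \eqref{yToda}, which gives $\sigma_n''=-\frac{1-c}{c^2}a_n^2(x_n-x_{n-1})$. Since only $(\sigma_n'')^2$ occurs in \eqref{sPVI}, I would square and use the elementary identity $a_n^4(x_n-x_{n-1})^2=\big[a_n^2(x_n+x_{n-1})\big]^2-4a_n^2\big[a_n^2x_nx_{n-1}\big]$. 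The two symmetric combinations on the right are furnished in closed form by the compatibility relations \eqref{xxyy} and \eqref{xxy} as polynomials in $y_n$, $S_n$, $a_n^2$ and $c$; this is exactly what eliminates the individual $x_n$, $x_{n-1}$ and closes the equation. Consequently $[c(c-1)\sigma_n'']^2=\frac{(c-1)^4}{c^2}a_n^4(x_n-x_{n-1})^2$ becomes a known rational expression in $\sigma_n$, $\sigma_n'$, $c$.

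With everything expressed through $\sigma_n$, $\sigma_n'$ and $c$, the assertion \eqref{sPVI} reduces to a polynomial identity that I would verify by direct substitution. A structural check guides and confirms the algebra: with $K=\alpha\beta-\frac14(\alpha+\beta+n)^2$ one computes $\sigma_n'+d_1^2=-(y_n+n\beta)$, $\sigma_n'+d_2^2=-(y_n+n\alpha)$, $\sigma_n'+d_3^2=-\big(y_n+n-(1-\alpha)(1-\beta)\big)$ and $\sigma_n'+d_4^2=-\big(y_n+n\gamma-(\gamma-\alpha)(\gamma-\beta)\big)$, so the right-hand side $\prod_{i=1}^4(\sigma_n'+d_i^2)$ is precisely the numerator appearing on the right of \eqref{dP2}. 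This pins down the target and shows that the first-derivative term $\sigma_n'[c(c-1)\sigma_n'']^2$ together with the square $\big[\sigma_n'(2\sigma_n-(2c-1)\sigma_n')+d_1d_2d_3d_4\big]^2$ must collapse onto that product.

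The main obstacle is this final collapse. After substituting the closed forms for $a_n^2(x_n+x_{n-1})$, $a_n^2x_nx_{n-1}$, $a_n^2$ and $S_n$, one faces a large polynomial identity in $c$, $y_n$ and $\sigma_n$, and verifying that the quadratic-in-$\sigma_n''$ left-hand side equals the factored right-hand side demands a long and delicate simplification — the sort of computation the authors report doing with computer algebra. The conceptual content is nonetheless completely fixed by three facts: the identity $\sigma_n'=K-y_n$, the squaring relation $a_n^4(x_n-x_{n-1})^2=\big[a_n^2(x_n+x_{n-1})\big]^2-4a_n^2\big[a_n^2x_nx_{n-1}\big]$ (which explains why \eqref{sPVI} is quadratic in the second derivative), and the factorization of its right-hand side matching \eqref{dP2}.
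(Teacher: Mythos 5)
Your proposal is correct and follows essentially the same route as the paper: both sum the Toda equation \eqref{xToda} to get $(1-c)S_n'=y_n+S_n$, then eliminate $x_n$, $x_{n-1}$ and $a_n^2$ using \eqref{yToda}, \eqref{xxyy}, \eqref{xxy} and \eqref{aysumx} to obtain a second-order ODE for $(c-1)S_n$, and finally apply the affine shift to $\sigma_n$. Your identity $a_n^4(x_n-x_{n-1})^2=\bigl[a_n^2(x_n+x_{n-1})\bigr]^2-4a_n^2\bigl[a_n^2x_nx_{n-1}\bigr]$ is just a rearrangement of the paper's step of multiplying the sum and difference of \eqref{xxyy} and \eqref{yToda}, and your observation that $\sigma_n'=K-y_n$ makes the factorization of the right-hand side into the numerator of \eqref{dP2} transparent, which is a nice complement to the paper's presentation.
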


\begin{proof}Again we consider $S_n$ (and $\sigma_n$) as a function of the variable $c$ and derivatives are with respect to~$c$. Summing~\eqref{xToda} and using $y_0=0$ shows that
\begin{gather*} (1-c) S_n' = y_n + S_n, \end{gather*}
and hence
\begin{gather} \label{yS}
 y_n = [(1-c)S_n]', \qquad y_n' = [(1-c)S_n]''.
\end{gather}
Subtracting \eqref{xxyy} and \eqref{yToda} gives
\begin{gather}
 2 \frac{(1-c)^2}{c^2} a_n^2 x_n = (1-c)y_n' - y_n\left( n \frac{1+c}{c} + \alpha+\beta - \frac{\gamma+1}{c} \right) \nonumber\\
\hphantom{2 \frac{(1-c)^2}{c^2} a_n^2 x_n=}{} + \frac{(\alpha\beta-\gamma)n}{c} + (\alpha+\beta+n) \frac{1-c}{c} S_n,\label{xnyS}
\end{gather}
while adding \eqref{xxyy} and \eqref{yToda} gives
\begin{gather*}
 2 \frac{(1-c)^2}{c^2} a_n^2 x_{n-1}= -(1-c)y_n' - y_n\left( n \frac{1+c}{c} + \alpha+\beta - \frac{\gamma+1}{c} \right)\nonumber\\
 \hphantom{2 \frac{(1-c)^2}{c^2} a_n^2 x_{n-1}=}{} + \frac{(\alpha\beta-\gamma)n}{c}
 + (\alpha+\beta+n) \frac{1-c}{c} S_n.
\end{gather*}
If we multiply both expressions, then
\begin{gather}
4 \frac{(1-c)^4}{c^4} a_n^4 x_n x_{n-1} = \left[- y_n\left( n \frac{1+c}{c} + \alpha+\beta - \frac{\gamma+1}{c} \right)\right.\nonumber\\
\left.\hphantom{4 \frac{(1-c)^4}{c^4} a_n^4 x_n x_{n-1} =}{}+ \frac{(\alpha\beta-\gamma)n}{c} + (\alpha+\beta+n) \frac{1-c}{c} S_n \right]^2 - \bigl[(1-c) y_n'\bigr]^2.\label{xx1}
\end{gather}
Recall that \eqref{aysumx} gives
\begin{gather*} \frac{(1-c)^2}{c^2} a_n^2 = \frac{1-c}{c} (y_n+S_n) + \frac{n(n+\alpha+\beta-\gamma-1)}{c}, \end{gather*}
hence combining this with \eqref{xxy} gives
\begin{gather}
4 \frac{(1-c)^4}{c^4} a_n^4 x_n x_{n-1} = 4 \left( \frac{1-c}{c} (y_n+S_n) + \frac{n(n+\alpha+\beta-\gamma-1)}{c} \right)\nonumber \\
 \hphantom{4 \frac{(1-c)^4}{c^4} a_n^4 x_n x_{n-1} =}{} \times \left( y_n \left(y_n-\alpha\beta+\frac{\gamma}{c} \right) - (y_n-\alpha\beta) \frac{1-c}{c} S_n \right).\label{xx2}
\end{gather}
Clearly \eqref{xx1}--\eqref{xx2} gives the equation
\begin{gather*}
\left[- y_n\left( n \frac{1+c}{c} + \alpha+\beta - \frac{\gamma+1}{c} \right) + \frac{(\alpha\beta-\gamma)n}{c}
 + (\alpha+\beta+n) \frac{1-c}{c} S_n \right]^2 - \bigl[(1-c) y_n' \bigr]^2 \\
\qquad{} = 4 \left( \frac{1-c}{c} (y_n+S_n) + \frac{n(n+\alpha+\beta-\gamma-1)}{c} \right) \\
\qquad\quad{} \times \left( y_n \left(y_n-\alpha\beta+\frac{\gamma}{c} \right) - (y_n-\alpha\beta) \frac{1-c}{c} S_n \right),
\end{gather*}
and if we replace $y_n$ and $y_n'$ by~\eqref{yS}, then this is a non-linear second order differential equation for~$S_n$, or better for $\hat{\sigma}_n =(c-1)S_n$:
\begin{gather*}
 [c(1-c)\hat{\sigma}_n'']^2 + 4 \bigl( (c-1)\hat{\sigma}_n' - \hat{\sigma}_n + n(n+\alpha+\beta-\gamma-1) \bigr)\\
 \qquad\quad{}\times \bigl( \hat{\sigma}_n' (c\hat{\sigma}_n' +\alpha\beta c- \gamma)-
(\hat{\sigma}_n'+\alpha\beta)\hat{\sigma}_n \bigr) \\
 \qquad{} = \bigl[ \hat{\sigma}_n' \bigl( n+(\alpha+\beta+n)c-\gamma-1 \bigr) + (\alpha\beta-\gamma)n - (\alpha+\beta+n)\hat{\sigma}_n \bigr]^2.
\end{gather*}
If we now put $\sigma_n = \hat{\sigma}_n+Kc+L$, then a lengthy but straightforward computation gives the required Painlev\'e~VI $\sigma$-equation.
\end{proof}

The Painlev\'e $\sigma$-equations are given as equations $\textup{E}_{\scriptstyle\textrm{I}}$--$\textup{E}_{\scriptstyle\textrm{VI}}$ in \cite{Okamoto} and $\sigma$-equations $\sigma \textup{PII}$--$\sigma \textup{PVI}$ are given in \cite[equation~(8.15) in Section~8.1 or equation~(8.29) in Section~8.2]{Forrester}. They are equiva\-lent to the six Painlev\'e equations $\textup{P}_{\scriptstyle\textup{I}}$--$\textup{P}_{\scriptstyle\textup{VI}}$ in the sense that there is a one-to-one correspondence between solutions of the Painlev\'e equations and the corresponding $\sigma$-equations. Clearly our solution $\sigma_n$ is a special function solution which is expressed in terms of a Wronskian containing hypergeometric functions.

If $\sigma_n$ is known, then also $S_n$ is known, and then from \eqref{yS} it follows that $y_n = [(1-c)S_n]'$. Using \eqref{aysumx} we find
\begin{gather*} \frac{1-c}{c} a_n^2 = y_n + S_n +\frac{n(n+\alpha+\beta-\gamma-1)}{1-c}, \end{gather*}
so that the recurrence coefficient $a_n^2$ is in terms of $S_n$ and $S_n'$. For $x_n$ one can use \eqref{xnyS} to find that it is in terms of $S_n$, $S_n'$ and $S_n''$. Then finally~\eqref{bx} shows that~$b_n$ is also in terms of~$S_n$, $S_n'$ and $S_n''$. Hence $S_n$ and its first two derivatives are enough to find the quantities of interest for these discrete orthogonal polynomials. Furthermore $S_n''$ is in terms of $S_n$ and~$S_n'$ because of the $\sigma$-equation~\eqref{sPVI}.

\begin{remark*} Special function solutions of Painlev\'e VI are generated by a seed function that comes from a Riccati equation; see, e.g., \cite[Section~7.5]{ClarksonLN} or \cite[Section~6.2.5]{WVA}. We can show that $x_n$ indeed satisfies a Riccati equation and in particular that $x_0$ is the seed function. Differentiate~\eqref{xToda} with respect to~$c$ to find
\begin{gather*} (1-c)x_n'' = y_{n+1}' -y_n' + 2x_n' . \end{gather*}
Replace $y_{n+1}'$ and $y_n'$ by using the Toda equation \eqref{yToda}, then
\begin{gather} \label{xn2}
 (1-c)^2 x_n'' = \frac{(1-c)^2}{c^2} \bigl( a_{n+1}^2(x_{n+1}-x_n) - a_n^2(x_n-x_{n-1}) \bigr) + 2(1-c)x_n' .
\end{gather}
Combining \eqref{difadify} with \eqref{bx} and \eqref{xToda} gives
\begin{gather*} \frac{(1-c)^2}{c^2} \big(a_{n+1}^2-a_n^2\big) = \frac{(1-c)^2}{c} x_n' + \frac{2n+\alpha+\beta-\gamma}{c}. \end{gather*}
Multiply this by $x_n$ and add this to \eqref{xn2} to find
\begin{gather}
 (1-c)^2 x_n'' + \frac{(1-c)^2}{c} x_n x_n' + \frac{2n+\alpha+\beta-\gamma}{c} x_n \nonumber\\
\qquad{} = \frac{(1-c)^2}{c^2} \bigl( a_{n+1}^2 x_{n+1} + a_n^2 x_{n-1} - 2a_n^2 x_n \bigr) + 2(1-c)x_n' . \label{xn2-2}
\end{gather}
Multiply \eqref{difaxdify} by $\frac{1-c}{c}$ and add this to \eqref{xn2-2} to find
\begin{gather*}
 (1-c)^2 x_n'' + \frac{(1-c)^2}{c} x_nx_n' + \frac{2n+\alpha+\beta-\gamma}{c} x_n + \frac{1-c}{c} (y_n-\alpha\beta) \\
\qquad\quad{} + \frac{1-c}{c} \left( x_n+ \frac{n+(n+\alpha+\beta)c-\gamma}{1-c} \right) (y_{n+1}-y_n) \\
\qquad{} = 2 \frac{(1-c)^2}{c^2} a_n^2 (x_{n-1}-x_n) + 2(1-c) x_n' .
\end{gather*}
Use \eqref{xToda} and \eqref{yToda} to replace $y_{n+1}-y_n$ and $x_n-x_{n-1}$ and collect terms to find
\begin{gather}
c(1-c)x_n'' + 2 (1-c) x_nx_n' + \bigl( n+(n+\alpha+\beta-2)c-\gamma \bigr) x_n' - x_n^2+ (n+\alpha+\beta)x_n - \alpha\beta \nonumber\\
 \qquad{} = -y_n -2 c y_n'.\label{xn2yn}
\end{gather}
This equation contains only the functions $x_n''$, $x_n'$, $x_n$ and the functions $y_n'$, $y_n$. Observe that the left hand side can be written as
\begin{gather*} \bigl( c(1-c)x_n' + (1-c)x_n^2 +\bigl(n+(n+\alpha+\beta)c-\gamma-1\bigr)x_n - \alpha\beta c \bigr)' \end{gather*}
so that \eqref{xn2yn} is in fact a Riccati equation for $x_n$ if $y_n$ is given. Recall that $y_0=0$, therefore
we have the Riccati equation for $x_0$
\begin{gather*} c(1-c)x_0' + (1-c)x_0^2 +\bigl((\alpha+\beta)c-\gamma-1\bigr)x_0 - \alpha\beta c = \textrm{constant}. \end{gather*}
One can verify this, using the fact that
\begin{gather*} x_0(c) = \frac{c\alpha\beta}{\gamma} \frac{{}_2F_1(\alpha+1,\beta+1;\gamma+1;c)}{{}_2F_1(\alpha,\beta;\gamma;c)}
- \frac{(\alpha+\beta)c-\gamma}{1-c}, \end{gather*}
and it turns out that the constant is~$-\gamma$. This Riccati equation for $x_0$ gives a seed function for all the special function solutions~$x_n$.
\end{remark*}

\section{Asymptotic behavior} \label{sec4}

As we mentioned before, the case $\alpha=\gamma$ (or $\beta=\gamma$) gives the Meixner polynomials for which the recurrence coefficients
are known, see \eqref{Meixner}. If we compare this with \eqref{bx} and \eqref{aysumx}, then it follows that for this special case
\begin{gather*} x_n = \gamma, \qquad y_n =-n\gamma. \end{gather*}
The sequence $(x_n)_n$ is a constant sequence and the constant is a zero of the right hand side of~\eqref{dP1}.

In Fig.~\ref{Fig1} we computed the $(x_n,y_n)_n$ for $\alpha=3/2$, $\beta=3$, $\gamma=1/3$ and $c=1/2$ for the weights~$w_k$ in~\eqref{weights} on the integers $\mathbb{N} = \{0,1,2,3,\ldots\}$ by using the recurrence \eqref{dP1}--\eqref{dP2}. We used a precision of~\texttt{Digits:=50} in Maple, because for \texttt{Digits:=10} the resulting values for~$x_n$,~$y_n$ were wrong when $n \geq 40$. The precision \texttt{Digits:=20} gives the same plots and the computed values only go wrong for $n \geq 80$. The initial values are
\begin{gather*} y_0=0, \qquad x_0 = \frac{m_1}{m_0} - \frac{(\alpha+\beta)c-\gamma}{1-c}, \end{gather*}
where
\begin{gather*} m_0= {}_2F_1(\alpha,\beta;\gamma;c), \qquad m_1 = \frac{c\alpha\beta}{\gamma} {}_2F_1(\alpha+1,\beta+1;\gamma+1;c). \end{gather*}
The calculations seem to suggest that~$x_n$ converges to~$\gamma$ and that~$y_n$ decreases linearly for large~$n$.

\begin{figure}[ht]\centering
\includegraphics[width=2.8in]{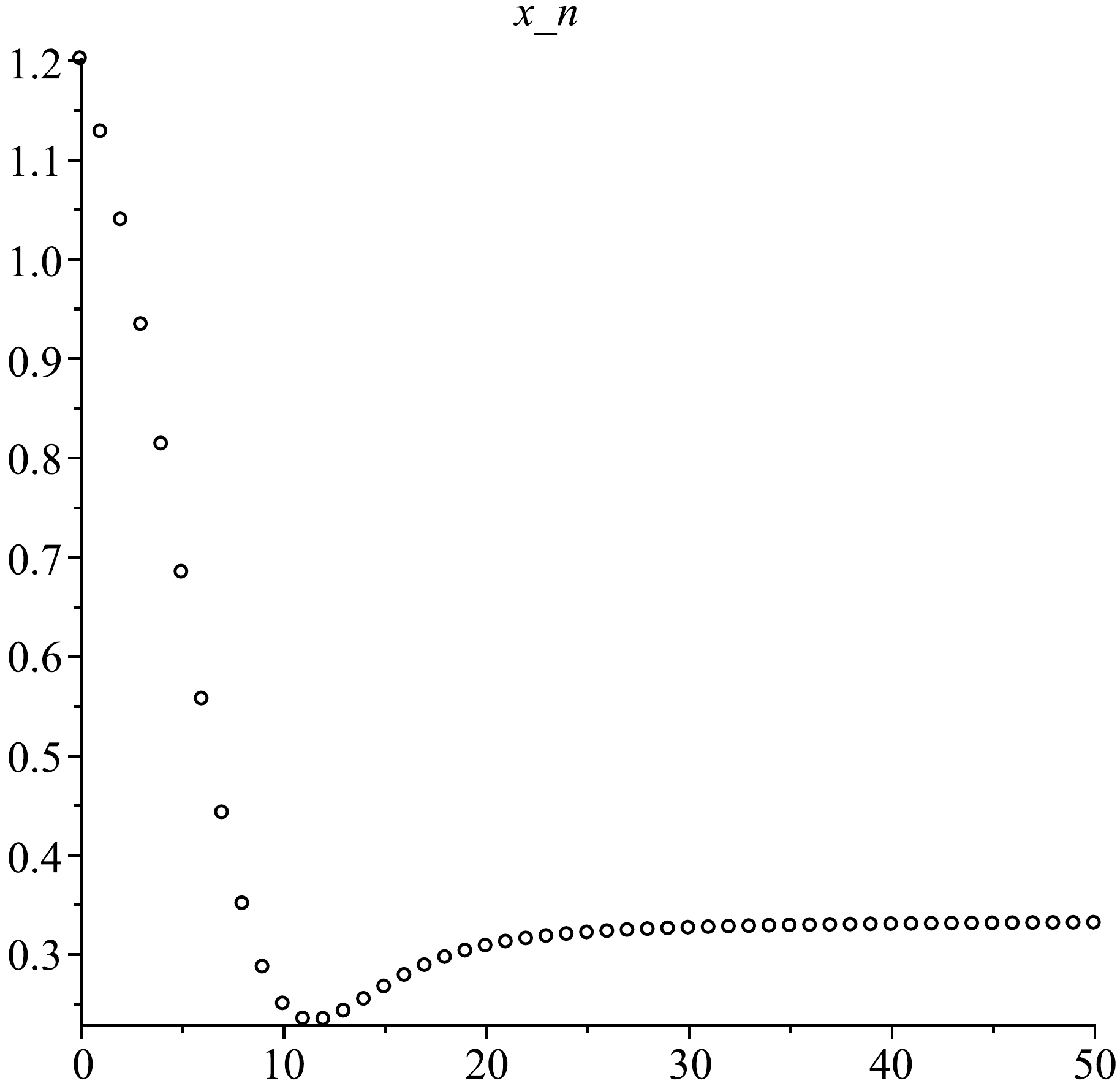} \qquad
\includegraphics[width=2.8in]{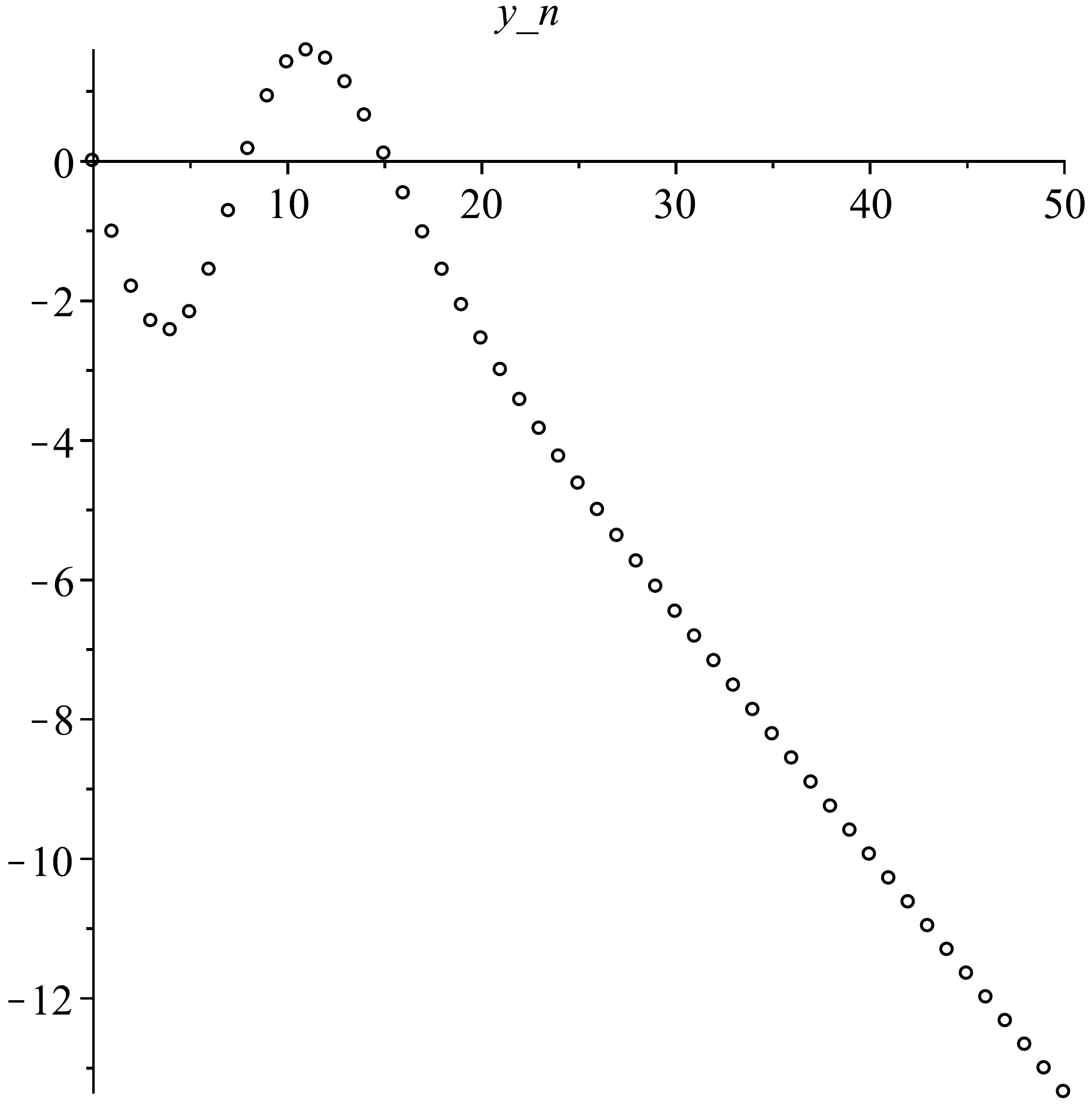}
\caption{The sequences $x_n$ (left) and $y_n$ (right) for $(\alpha,\beta,\gamma,c) = \big(\frac32,3,\frac13,\frac12\big)$.}\label{Fig1}
\end{figure}

We conjecture that for this initial value for $x_0$
\begin{gather*} \lim_{n \to \infty} x_n = \gamma \end{gather*}
and then \eqref{dP1} implies that
\begin{gather*} \lim_{n \to \infty} ( y_n +n\gamma ) = (\gamma-\alpha)(\gamma-\beta). \end{gather*}
Note that the latter is a zero of the right hand side of~\eqref{dP2}. This asymptotic behavior seems to be confirmed by the numerical results. We believe that this is the only initial value which gives this asymptotic behavior. The calculations are very sensitive of the initial value: a slight change of $x_0$ gives a more erratic behavior of the $(x_n,y_n)$ for large $n$.

We can also use the weights \eqref{weights} on the shifted lattice $\mathbb{N}+1-\gamma$ since $w(-\gamma)=0$ for the function $w$ in~\eqref{hyperw}, in a similar way as was done in \cite[Sections~2.2 and~3.2]{Smet}. The same recurrence relations still hold for the corresponding recurrence coefficients $\hat{a}_n,\hat{b}_n$. These recurrence coefficients are related to the recurrence coefficients of the weights on $\mathbb{N}$ by
\begin{gather*}
 \hat{a}_n^2(\alpha,\beta,\gamma,c) = a_n^2(\alpha-\gamma+1,\beta-\gamma+1,2-\gamma,c), \\
 \hat{b}_n(\alpha,\beta,\gamma,c) = b_n(\alpha-\gamma+1,\beta-\gamma+1,2-\gamma,c) +1-\gamma.
\end{gather*}
We conjecture that in this case
\begin{gather*} \lim_{n \to \infty} \hat{x}_n = 1 \end{gather*}
and then \eqref{dP1} gives
\begin{gather*} \lim_{n \to \infty} (\hat{y}_n + n) = (1-\alpha)(1-\beta). \end{gather*}
The corresponding initial values are
\begin{gather*} \hat{y}_0=0, \qquad \hat{x}_0 = x_0(\alpha-\gamma+1,\beta-\gamma+1,2-\gamma,c) +\gamma -1 . \end{gather*}
Again we believe this is the only initial value for which this asymptotic behavior holds. An interesting question is to find out which initial values give an asymptotic behavior of the form
\begin{gather*} \lim_{n\to \infty} x_n = \alpha, \qquad \textrm{or} \qquad \lim_{n \to \infty} x_n = \beta, \end{gather*}
which are the other two zeros of the right hand side of~\eqref{dP1}.

\section{Concluding remarks}

The reason why we considered the hypergeometric weights \eqref{weights} in this paper is twofold. On one hand they generalize various other discrete weights that were already analyzed in the li\-te\-rature (Charlier, Meixner, generalized Charlier, generalized Meixner, generalized Krawtchouk), building up from explicit rational expressions for the recurrence coefficients to second order non-linear recurrence and differential equations (Painlev\'e III and Painlev\'e V). On the other hand, it was already known that Painlev\'e VI has special function solutions in terms of Wronskians with hypergeometric functions \cite[Section~6.2.5]{WVA}, and such Wronskians appear naturally in formulas for the recurrence coefficients $a_n^2$ and $b_n$ for orthogonal polynomials:
\begin{gather*} a_n^2 = \frac{\Delta_{n+1}\Delta_{n-1}}{\Delta_n^2}, \qquad b_n = \frac{\Delta_{n+1}^*}{\Delta_{n+1}} - \frac{\Delta_n^*}{\Delta_n}, \end{gather*}
where $\Delta_n = \det ( m_{i+j} )_{i,j=0}^{n-1}$ is the Hankel determinant and $\Delta_n^*$ is obtained from $\Delta_n$ by repla\-cing the
last column $(m_{n-1},m_n,\ldots,m_{2n-2})^{\rm T}$ by $(m_n,m_{n+1},\ldots,m_{2n-1})^{\rm T}$ and $(m_n)_{n \in \mathbb{N}}$ are the moments
\begin{gather*} m_n = \sum_{k=0}^\infty k^n w_k . \end{gather*}
The moment $m_0$ is a Gauss hypergeometric series and all the other moments can be obtained from them by
\begin{gather*} m_n = \left( c \frac{{\rm d}}{{\rm d}c} \right)^n m_0, \end{gather*}
so that $\Delta_n$ and $\Delta_n^*$ are Wronskians. The challenge was to find the discrete Painlev\'e equations (Theorem~\ref{thm1}) and the continuous Painlev\'e equation (Theorem \ref{thm2}) for the recurrence coefficients of the orthogonal polynomials with these hypergeometric weights, using only standard properties of orthogonal polynomials and a number of suitable transformations. The system of discrete Painlev\'e equations \eqref{dP1}--\eqref{dP2} seems to be new but closely related to d-P$\big(E_6^{(1)}/A_2^{(1)}\big)$, and the Painlev\'e equation~\eqref{sPVI} is the $\sigma$-form of the Painlev\'e VI equation.

Note that one can write the weights in \eqref{weights} as
\begin{gather*} w_k = \frac{\Gamma(\gamma)}{\Gamma(\alpha)\Gamma(\beta)} \frac{\Gamma(\alpha+k) \Gamma(\beta+k)}{\Gamma(\gamma+k) \Gamma(k+1)} c^k := w(k) \end{gather*}
and that $w(-1)=0$, which gives a boundary condition for the weights on the lattice $\mathbb{N}=\{0,1,2,3,\ldots \}$. One also has
$w(-\gamma)=0$ so that one can also use these weights on the shifted lattice $\mathbb{N}+1-\gamma = \{1-\gamma, 2-\gamma,3-\gamma,\ldots\}$, as was done for generalized Charlier and Meixner polynomials in \cite{Smet}. The recurrence coefficients will satisfy the same discrete Painlev\'e equations but with a different initial value because $m_0$ and $m_1$ are different hypergeometric series: one has $a_0^2=0$ and $b_0 = m_1/m_0$ with
\begin{gather*}
 m_0 = \frac{\Gamma(\gamma)}{\Gamma(\alpha)\gamma(\beta)} \sum_{k=0}^\infty \frac{\Gamma(\alpha+k+1-\gamma)\Gamma(\beta+k+1-\gamma)}
 {\Gamma(k+1) \Gamma(k+2-\gamma)} c^{k+1-\gamma} \\
 \hphantom{m_0}{}= \frac{\Gamma(\gamma)\Gamma(\alpha+1-\gamma)\Gamma(\beta+1-\gamma)}{\Gamma(2-\gamma)\Gamma(\alpha)\Gamma(\beta)}
 c^{1-\gamma} {}_2F_1(\alpha+1-\gamma,\beta+1-\gamma;2-\gamma;c),
\end{gather*}
and $m_1 = c \frac{{\rm d}}{{\rm d}c} m_0$. The same Painlev\'e VI $\sigma$-equation will hold, but the solution will be a~different special function solution coming from a different hypergeometric seed function. The general case is obtained by taking a combination of both lattices, giving a one parameter family of special function solutions of Painlev\'e VI.

Note that nonlinear recurrence relations for the recurrence coefficients of these orthogonal polynomials were found by Dominici in \cite[Theorem~4]{Domin}, but these were of higher order and were not identified as discrete Painlev\'e equations. Our version \eqref{dP1}--\eqref{dP2} has the advantage that one can predict the asymptotic behavior of $a_n^2$ and $b_n$ (or $x_n$ and $y_n$) as $n \to \infty$ from them, and in Section~\ref{sec4} we conjectured this asymptotic behavior when the weights are on the lattice~$\mathbb{N}$ and on the shifted lattice $\mathbb{N}+1-\gamma$.

\subsection*{Acknowledgements}
GF acknowledges the support of the National Science Center (Poland) via grant OPUS 2017/25/ B/BST1/00931. Support of the Alexander von Humboldt Foundation is also greatfully acknow\-ledged. WVA is supported by FWO research project G.0864.16N and EOS project PRIMA 30889451. The authors thank the anonymous referees for their comments, which improved the original version.

\pdfbookmark[1]{References}{ref}
\LastPageEnding

\end{document}